\theoremstyle{definition}
\theoremstyle{remark}
\theoremstyle{definition}
\theoremstyle{plain}
\newtheorem{proposition}{Proposition}
\newtheorem{theorem}{Theorem}
\theoremstyle{plain}
\newtheorem{lemma}{Lemma}
\theoremstyle{remark}
\newtheorem{rem}{Remark}
\renewcommand{\Re}{\operatorname{Re}}
\renewcommand{\Im}{\operatorname{Im}}
\newcommand{\RR}{\mathbb{R}}
\newcommand{\CC}{\mathbb{C}}
\newcommand{\PC}{\mathcal{P}}
\newcommand{\dd}{\mathrm{d}}
\title{How close are cone singularities on a random flat surface?}
\author[1,2]{A. Rukhovich}
\date{}
\begin{document}


    \maketitle

    \begin{abstract}
        We study the shortest geodesics on flat cone spheres, i.e.\ flat metrics on the sphere
        with conical singularities.
        The length of the shortest geodesic between two singular points can be treated as a function
        on the moduli space of flat cone spheres with prescribed angle defects.
        We prove a recurrent relation on the distribution of this function with respect
        to Thurston's volume form on the moduli space.
    \end{abstract}

    \section{Introduction}\label{sec:intro}

    A \textit{flat cone sphere} is a metric on the 2-sphere which is locally isometric to the Euclidean plane
    except at finitely many points, where it has conical singularities.
    This means that a neighborhood of the point is isometric to a Euclidean cone with total angle $2\pi-\alpha$
    for some $\alpha<2\pi$.
    The number $\alpha$ is called \textit{angle defect}.

    Using Gauss-Bonnet theorem it is easy to see that the sum of angle defects always equals $4\pi$.
    Let $\alpha = (\alpha_1,\dots,\alpha_n)$ be a list of nonzero real numbers less than $2\pi$ such that
    $\alpha_1+\dots+\alpha_n = 4\pi$.
    Denote by $\mathcal{M}_\alpha$ the
    space of isometry classes of flat cone spheres with angle defects
    $\alpha_1,\dots,\alpha_n$ and unit area.

    The theory of such spaces was developed for the case where all $\alpha_i$ are positive by Thurston
    in his work~\cite{thurston1998shapes} extending the study of
    Deligne and Mostow~\cite{deligne1986monodromy}.
    Thurston proved that $\mathcal{M}_\alpha$ carries a natural complex hyperbolic metric and has a finite volume
    with respect to this metric (see also~\cite{schwartz2015notes}, ~\cite{nguyen2010triangulations}).
    A more general situation allowing negative angle defects was conducted by Veech~\cite{veech1993flat}.
    Moreover, this work deals with moduli spaces of flat metrics with cone singularities
    on orientable surfaces of any genus.
    In this case a similar construction instead of giving a complex hyperbolic metric gives
    the structure of a space locally modelled by $U(p, q) / U(0, 1)\times U(p, q-1)$ for some positive integers $p, q$.
    Veech proved that the moduli spaces of flat metrics with (not necessarily positive angle defect)
    cone singularities is of finite volume for almost all lists of angle defects~\cite[Theorem 0.25]{veech1993flat}.




    Denote by $\mathrm{Vol}$ Thurston's hyperbolic volume form on $\mathcal{M}_\alpha$.
    We will also use $\mathrm{Vol}$ for the volume function on (the sigma-algebra of Borel) subsets of $\mathcal{M}_\alpha$.
    Thurston proved that the total volume $\mathrm{Vol}(\mathcal{M}_\alpha)$ is finite for each $\alpha$.
    The exact value of this volume was calculated by McMullen~\cite{mcmullen2017gauss} .

    Despite the known volume of the moduli space, not much is known about the behaviour of a random flat cone sphere.
    In the study of moduli space of hyperbolic metrics on surfaces of positive genus the common characteristic of
    a metric is the length of a systole -- a closed geodesic in given free homotopy class.
    This was used by Mirzakhani to obtain a recurrent formula for Weil-Petersson volume of moduli space of
    hyperbolic Riemann surfaces~\cite{mirzakhani2007simple}.
    In the study of translation surfaces there is an approach due to Masur et al~\cite{masur2022expected} 
    investigating the covering radius, that is the largest radius of an immersed disk.
    They prove some bounds on the asymptotics of expected value of this radius when the genus goes to infinity.
    We deal with another natural characteristic of a flat metric: the \textit{length function}, i.e.~the
    length of the shortest geodesic between two fixed singular points or simply the distance between these points.
    This is also applicable for studying translation surfaces, in this direction there is a well-known result than 
    $V(\epsilon)\simeq c\epsilon^2 \text{ as } \epsilon\to 0$ where $V(\epsilon)$ is the volume of set of 
    surfaces where the length function is less than $\epsilon$.
    There are also several studies proving $V_2(\epsilon)\lesssim \epsilon^2$ for volume of the set of surfaces having two
    non-parallel geodesics of length less than $\epsilon$ in the strata of moduli spaces of translation
    surfaces and affine submanifolds of these strata~\cite{masur1991hausdorff, avila2013sl, nguyen2012volumes}.

    The aim of our study is finding the distribution of the length function $l$ on $\mathcal{M}_\alpha$ where $l(\rho)$ is the distance between
    two fixed singularities with angle defects $\alpha_1, \alpha_2$ in the flat cone sphere $\rho$.


    To state the result let us describe the setup more precisely.
    We distinguish two of the cone singularities on the flat cone sphere and
    for $\varphi = (\varphi_1, \varphi_2),\ \alpha = (\alpha_1, \dots, \alpha_n)$ denote
    $\mathcal{M}_{\varphi, \alpha} \coloneqq \mathcal{M}_{(2\pi-\varphi_1,2\pi-\varphi_2, \alpha_1,\dots,\alpha_n)}$.
    Therefore $\varphi_1, \varphi_2$ are the total angles of the distinguished cone singularities.
    The conditions on $\varphi, \alpha$ will be
    $\varphi_1+\varphi_2=\alpha_1+\dots+\alpha_n;\ 0<\alpha_i<2\pi$.
    For flat cone spheres in $\mathcal{M}_{\varphi, \alpha}$ we call
    the first two singular points \textit{distinguished}.

    The length of the shortest geodesic between distinguished points determines a function on $\mathcal{M}_{\varphi, \alpha}$.
    More precisely, to calculate the \textit{length function $\bm{l}$} of $m\in \mathcal{M}_{\varphi, \alpha}$
    one has to take a flat cone sphere representing $m$ such that its total area is unit and
    denote $\bm{l}(m)$ to be the distance between the distinguished singularities in this flat cone sphere.
    The length function $\bm{l}:\mathcal{M}_{\varphi, \alpha}\to\mathbb{R}_+$ is piecewise smooth, and we can consider
    the integration of the volume form along fibers of $\bm{l}$.
    This yields a $1$-form $\bm{l}_* \mathrm{Vol}_{\varphi,\alpha}$ on $\mathbb{R}_+$
    called \textit{length distribution for $\mathcal{M}_{\varphi, \alpha}$}.
    The formula for its density function $\rho_{\varphi, \alpha}(l)$ is our main result.

    To simplify the resulting formula let us introduce the
    \textit{area function $\mathbf{a}:\mathcal{M}_{\varphi, \alpha}\to\mathbb{R}_+$} to be $\mathbf{a}(m) = 1/\bm{l}^2(m)$.
    It can be equivalently defined as area of the flat cone sphere representing $m$ blown with homothety
    such that the distance between the distinguished points is unit.
    Finding the density function $\rho_{\varphi, \alpha}(l)$ of the distribution of $\bm{l}$ is equivalent to
    finding the analogous density function $f_{\varphi, \alpha}(a)$ for the distribution of $\mathbf{a}$.
    Precisely,
    \[l^{-3}\rho_{\varphi, \alpha}(l) = 2f_{\varphi, \alpha}(1/l^2).\]

    For list $\alpha = (\alpha_1,\dots, \alpha_n)$ denote by $\#\alpha, \Sigma\alpha$ respectively the number and the sum of elements,
    i.e. $\#\alpha=n,\ \Sigma\alpha=\alpha_1+\dots+\alpha_n$.

    \begin{theorem}
        \label{thm1}
        Let $f(\varphi, \alpha, a)$ be the density function of $\mathbf{a}_*\mathrm{Vol}_{\varphi,\alpha}$ where  $\mathbf{a}:\mathcal{M}_{\varphi,\alpha}\to\RR_+$ is the area function.
        Then the upper bound of the support of $f(a) = f(\varphi, \alpha, a)$ is infinite if $\varphi_1 + \varphi_2 \ge 2\pi$ and $\frac{1}{\cot\frac{\varphi_1}{2} + \cot\frac{\varphi_2}{2}}$ when $\varphi_1 + \varphi_2 < 2\pi$.

        On the interval from zero to this upper bound the function $f(a)$ satisfies the recurrent relation
        \[\begin{multlined}
              \left(2q(\varphi)-\frac{n}{a}\right)\,f(\varphi, \alpha, a)'_a + \left(1-q(\varphi)a\right)\,f(\varphi, \alpha, a) =\\=
              \sum_{(\widehat{\alpha}, \widetilde{\alpha})} \int_0^a dx \int_{I(\varphi, \alpha_1, \alpha_2)}d\beta\
              \frac{x^{\#\widehat{\alpha}}(a-x)^{\#\widetilde{\alpha}}}{a^{\#\alpha}}
              r(\beta, \Sigma\widehat{\alpha}-\beta, \varphi_1-\beta, \Sigma\widetilde{\alpha}-\varphi_1+\beta)
              \cdot \\ \cdot
              f\big((\beta, \Sigma\widehat{\alpha}-\beta), \widehat{\alpha}, x\big)\
              f\big((\varphi_1-\beta, \Sigma\widetilde{\alpha}-\varphi_1+\beta), \widetilde{\alpha}, a-x\big)
        \end{multlined}\]
        where the sum is over shuffles of the list $\alpha$ into two lists
        $\widehat{\alpha} = (\alpha_{i_1},\dots,\alpha_{i_{n_1}}),
        \widetilde{\alpha} = (\alpha_{j_1},\dots,\alpha_{j_{n_2}})$ and the integration is over
        the interval $I(\varphi, \alpha_1, \alpha_2)$ from
        $\min(0, \varphi_1-\Sigma\widetilde{\alpha})$ to $\max(\Sigma\widehat{\alpha}, \varphi_1)$.
        (Note that the interval $I(\varphi, \alpha_1, \alpha_2)$ may be empty.)
        Here
        \begin{gather*}
            q(\varphi) = \cot\frac{\varphi_1}{2}+\cot\frac{\varphi_2}{2},\\
            r(\gamma_1, \gamma_2, \gamma_3, \gamma_4) =
            \cot\frac{\gamma_1}{2}+\cot\frac{\gamma_2}{2}+
            \cot\frac{\gamma_3}{2}+\cot\frac{\gamma_4}{2}.
        \end{gather*}
        The starting values for this recurrence are delta-functions for $1$-element $\alpha = (\alpha_1)$,
        namely,
        \[f(\varphi, (\alpha_1), a) = \delta\left(a-\frac{1}{\cot\frac{\varphi_1}{2} +
        \cot\frac{\varphi_2}{2}}\right).\]
    \end{theorem}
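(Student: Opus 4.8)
The plan is to relate a generic flat cone sphere in $\mathcal{M}_{\varphi,\alpha}$ to a gluing of two smaller flat cone spheres along a shortest geodesic between the distinguished points, to record how Thurston's volume form transforms under this gluing, and to convert the resulting identity into the stated differential recursion by a Stokes-type argument along the fibres of $\mathbf{a}$. The basic operation is: take flat cone spheres $S_1\in\mathcal{M}_{(\beta,\Sigma\widehat\alpha-\beta),\widehat\alpha}$ and $S_2\in\mathcal{M}_{(\varphi_1-\beta,\Sigma\widetilde\alpha-\varphi_1+\beta),\widetilde\alpha}$ whose shortest geodesics between the distinguished points have a common length $l$, slit each along that geodesic to get a flat bigon with two geodesic sides of length $l$, and glue the two bigons side to side. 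The result is a flat cone sphere $S\in\mathcal{M}_{\varphi,\alpha}$: the angles at the distinguished points add up to $\beta+(\varphi_1-\beta)=\varphi_1$ and $(\Sigma\widehat\alpha-\beta)+(\Sigma\widetilde\alpha-\varphi_1+\beta)=\varphi_2$, the interior cone points are the disjoint union $\widehat\alpha\sqcup\widetilde\alpha=\alpha$, and the length $l$ is realised by two geodesic arcs $\gamma,e$ from $P_1$ to $P_2$ in $S$ whose union is the simple closed curve along which $S_1$ and $S_2$ were glued. Running this in reverse, such configurations are exactly what is seen on the codimension-one wall in $\mathcal{M}_{\varphi,\alpha}$ across which the shortest $P_1P_2$-geodesic changes, i.e. where two geodesic arcs tie for shortest and split the surface; the gluing angle $\beta$ then ranges over the interval $I(\varphi,\alpha_1,\alpha_2)$ of the theorem, the integrand vanishing outside the subrange in which all four corner angles are positive (where one of the two moduli spaces, hence $f$, is empty).

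I would first compute this cut-and-glue on Thurston's volume form. In developing-map coordinates — the complex edge vectors of a geodesic triangulation refining the bigon decomposition, in which $\mathrm{Vol}$ is, up to a universal constant, a flat Hermitian volume form and the area is the corresponding Hermitian form — the coordinates of $S$ break up into those of $S_1$, those of $S_2$, a turning parameter that distributes the total turning of the gluing curve between the corners at $P_1$ and at $P_2$ (this parameter integrates to the factor $d\beta$), and an overall scale. Computing the Jacobian and then rescaling $S_1$, $S_2$ and $S$ each to unit distance between the distinguished points produces the weight $x^{\#\widehat\alpha}(a-x)^{\#\widetilde\alpha}/a^{\#\alpha}$, in which $x$ and $a-x$ are the $\mathbf{a}$-values of $S_1$ and $S_2$ (a Jacobian reflecting the cone-point counts of the two pieces, the two distinguished points being pinned, and the way the area normalisations combine), while $r(\beta,\Sigma\widehat\alpha-\beta,\varphi_1-\beta,\Sigma\widetilde\alpha-\varphi_1+\beta)$ is the Jacobian of the turning parameter relative to arclength along the gluing curve, which a computation at the four corners created by the cut expresses as the stated sum of half-angle cotangents — equivalently, $r$ is the sum of the $q$-invariants of the two pieces, since $(\beta,\Sigma\widehat\alpha-\beta)$ and $(\varphi_1-\beta,\Sigma\widetilde\alpha-\varphi_1+\beta)$ are the distinguished-angle pairs of $S_1$ and $S_2$. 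One also has to verify that this correspondence is a bijection onto the wall off a measure-zero set and with multiplicity one.

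With this in hand, the recursion should follow from Stokes' theorem. The function $\mathbf{a}$ is piecewise smooth with the above bifurcation wall (and lower-codimension strata) as its non-smooth locus; integrating a divergence identity for a suitable vector field on $\mathcal{M}_{\varphi,\alpha}$ adapted to the $\mathbb{R}_+$-scaling and to $\mathbf{a}$ over the sublevel set $\{\mathbf{a}<a\}$, and keeping track of the variation with $a$ of the disintegrated volume form along the fibres, the bulk and the $\mathbf{a}^{-1}(a)$-boundary contributions assemble into the first-order operator $(2q(\varphi)-\frac{n}{a})\,\partial_a+(1-q(\varphi)a)$ applied to $f$ — the coefficient $n=\#\alpha$ reflecting the dimension of $\mathcal{M}_{\varphi,\alpha}$, and $q(\varphi)=\cot\frac{\varphi_1}{2}+\cot\frac{\varphi_2}{2}$ entering through Gauss–Bonnet at the two distinguished cone points — while the flux through the bifurcation wall, rewritten via the correspondence of the previous paragraph, becomes exactly the right-hand side: the shuffle sum and the double integral of $r\cdot f\cdot f$ against the combinatorial weight, with the convolution $x\leftrightarrow a-x$ expressing additivity of the normalised areas of the pieces. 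The base case $\#\alpha=1$ and the endpoints of the support both reduce to the rigid three-cone sphere, which is the double of a Euclidean triangle with angles $\varphi_1/2,\varphi_2/2,\pi-\alpha_1/2$; normalising the $P_1P_2$-edge to unit length, the law of sines gives total area $1/(\cot\frac{\varphi_1}{2}+\cot\frac{\varphi_2}{2})$, the location of the $\#\alpha=1$ delta and, when $\varphi_1+\varphi_2<2\pi$ (so $q(\varphi)>0$), the top of the support; when $q(\varphi)\le 0$ the surface can be inflated indefinitely at fixed distance, so the support is all of $\mathbb{R}_+$.

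I expect the hard part to be the volume-form computation and the Stokes step together: pinning down Thurston's form under the non-canonical bigon cut, proving the resulting formulas are chart-independent, extracting the exact weights $x^{\#\widehat\alpha}(a-x)^{\#\widetilde\alpha}/a^{\#\alpha}$ and $r(\cdots)$ and the exact twist density $d\beta$, and identifying the precise left-hand operator, including its coefficients $2q(\varphi)$, $n$, and $q(\varphi)$; establishing the bijectivity and multiplicity of the cut-and-glue correspondence onto the wall is a second delicate point. A more combinatorial route — proving a McShane–Mirzakhani-type identity that sums an explicit kernel over all such bigon decompositions of $S$ and then integrating it over the fibres of $\mathbf{a}$ — should lead to the same recursion and might be cleaner to make rigorous in the presence of the curvature concentrated at the cone points.
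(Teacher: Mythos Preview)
Your overall architecture is the paper's: apply Stokes to a form $\iota_{g(\mathbf{a})v}\mathrm{Vol}$ on $\mathcal{M}_{\varphi,\alpha}$, the boundary contribution coming from the codimension-one wall $\Omega$ where two shortest $P_1P_2$-geodesics tie, and rewrite that wall term via the cut-and-glue correspondence into products $\mathcal{M}_{\widehat\varphi(\beta),\widehat\alpha}\times\mathcal{M}_{\widetilde\varphi(\beta),\widetilde\alpha}$ fibered over the splitting angle $\beta$. The factors $d\beta$, $r(\cdots)$ and $\mathbf{a}_1^{n_1}\mathbf{a}_2^{n_2}/\mathbf{a}^n$ do appear exactly as you predict, and the $\#\alpha=1$ base case is the doubled triangle you describe.

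The genuine gap is the vector field itself. You commit only to ``a suitable vector field adapted to the $\mathbb{R}_+$-scaling and to $\mathbf{a}$'', but the entire computation hinges on one specific choice, and neither an Euler/scaling field nor anything built from $\mathbf{a}$ alone will produce the stated operator. The paper uses a \emph{metric truncation} flow: near the (unique) shortest geodesic $\gamma$ of length $l$, excise the parallel strip of width $2t\,l$ and re-glue; the infinitesimal generator $v$ is smooth precisely off $\Omega$ and has the three explicitly computable derivatives $\partial_v l=-q(\varphi)l$, $\partial_v\mathbf{A}=-2l^2$, $2\Re(\partial_{w_1}v_1)=q(\varphi)$ in polygonal coordinates, which assemble directly into $d\iota_{g(\mathbf{a})v}\mathrm{Vol}$ and hence into the left-hand operator --- not via Gauss--Bonnet or fibre disintegration as you suggest. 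Likewise $r(\beta,\ldots)$ in the paper is not the ``Jacobian of a turning parameter relative to arclength'' but the normal component $\dot\rho$ of this same truncation flow through $\Omega$: truncating along one shortest geodesic moves the circumcentre coordinate $r$ of the bigon, and the four-cotangent sum drops out of that derivative. Until you name a field with these properties and carry out both computations, neither side of your recursion is actually pinned down; your heuristics for the coefficients are plausible-sounding but do not match the mechanism that produces them. The alternative McShane--Mirzakhani route you float is not pursued in the paper.
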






    \bigskip
    \textbf{Acknowledgements.} I am grateful to prof.~Alexander Gaifullin for his constant attention to the work and
    valuable conversations.
    Also, I would like to thank Vladislav Cherepanov, Nikita Klemyatin and Denis Gorodkov for discussions and help with
    the text preparation.

    \section{Polygonal coordinates}\label{sec:polygonal_coordinates}
    In this section we review the construction of coordinates on the labeled moduli space $\mathcal{M}_\alpha$ of
    flat cone spheres with fixed angle defects $\alpha=(\alpha_1,\dots,\alpha_n)$ up to homothety.
    This construction can be found in~\cite{thurston1998shapes, schwartz2015notes}.

    Let $X = (X_1,\dots,X_n)$ be $n$ points on the sphere $S^2$ and let us assign angle defects $\alpha_i$
    to points $X_i$ respectively.
    The points of $\mathcal{M}_\alpha$ will therefore correspond to homothety classes of flat metrics on $S^2$
    having singularity of defect $\alpha_i$ at each point $X_i$ modulo orientation-preserving
    homeomorphisms of $S^2$ leaving $X$ pointwise constant.

    Let us take the universal cover $p:U\to S^2\setminus x$ and its metric completion $\widehat{U}$.
    Note that the covering map $p$ can be continued to a map $\widehat{U}\to S^2$ so that the
    singular points of $\widehat{U}$ project to points in $X$.
    The metric of a flat cone sphere can be lifted from $S^2$ to $U$ making
    a $\pi_1(S^2\setminus X)$-invariant flat metric on $U$.

    Since $U$ is homeomorphic to a disk, flat metrics can be obtained as pullbacks of the
    Euclidean metric on the plane $\mathbb{C}=\mathbb{R}^2$.
    A map from $U$ to $\mathbb{C}$ will be called \textit{developing map} if the induced
    metric on $U$ coincides with the metric lifted to $U$ from some flat metric with cone singularities on $S^2$.

    To be more precise, denote by \textit{developing map} corresponding to the list $\alpha$
    a map $g:U\to \mathbb{C}$ which is orientation-preserving and locally isometric with respect to
    metric on $U$ pulled from some flat cone sphere with angle defects $\alpha$.
    Equivalently, these developing maps can be characterized by the following properties:
    \begin{itemize}
        \item $g$ can be continued to $\widehat{U}$,
        \item $g$ is orientation-preserving (with respect to fixed orientation on $U$ and standard orientation on the plane),
        \item the metric $g^*(|dz|^2)$ is $\pi_1(S^2\setminus X)$-equivariant, where $z$ is standard complex coordinate in the plane,
        \item for a loop $\gamma_k$ in $S^2\setminus X$ winding around exactly one point $X_k$ in positive direction
            the corresponding element in $\pi_1(S^2\setminus X)$ transforms the form $g^*(dz)$ to $e^{i\alpha_k}g^*(dz)$.
    \end{itemize}

    Then any flat cone sphere gives a developing map, though not unique.
    The flat cone sphere corresponding to a developing map $g$ is defined by the values of $g$ on the singular points
    of $\widehat{U}$ (cf.\ \cite{thurston1998shapes}).
    Denote these singular points as $\widehat{x}$, then the list $(g(X))_{X\in\widehat{x}}$ is the data to
    construct a flat cone sphere.
    The space of such lists corresponding to developing maps taken up to plane translation is called the
    Teichmüller space $\mathcal{T}_\alpha$ (Thurston called it the space of cocycles).
    It is proven (\cite{thurston1998shapes,nguyen2010triangulations}) that one can choose $n-2$ independent
    coordinates of the form $g(P_1)-g(P_2)$ for $P_1, P_2\in \widehat{x}$ and all other values of this form
    would be certain linear combinations of these coordinates.
    This turns the space $\mathcal{T}_\alpha$ into a complex affine manifold (by this we mean the existence of an atlas
    with affine transition maps).

    Now we want to pass from the coordinates on Teichmüller space $\mathcal{T}_\alpha$ to coordinates on the
    moduli space $\mathcal{M}_\alpha$.
    The latter is locally modelled as $\mathcal{T}_\alpha / \mathbb{C}^\times$, so the ratios between the
    independent coordinates on $\mathcal{T}_\alpha$ will be locally well-defined functions on $\mathcal{M}_\alpha$.
    There will be $n-3$ independent ratios, which provide a coordinate system on $\mathcal{M}_\alpha$.
    Note that despite the fact that the constructed coordinates on Teichmüller space are global coordinates,
    the ones on the moduli space are only local coordinates.
    The Hermitian form on the Teichmüller space defines a homogeneous Hermitian form on the moduli space.
    In case of all positive angle defects the former Hermitian form is of signature $(1, n-3)$ and the
    moduli space is locally modelled by the complex hyperbolic space.
    When some of the cone singularities have negative angle defects, the signature may be
    different~\cite{sauglam2022signature} but the construction still defines a volume form on $\mathcal{M}_\alpha$.

    Any developing map $t\in \mathcal{T}_\alpha$ defines a unique flat cone sphere.
    Let $\mathbf{A}(t)$ be the area of this sphere.
    The function $\mathbf{A}(t)$ can be written in coordinates $z = (z_1, \dots, z_{n-2})$ on $\mathcal{T}_\alpha$
    as $\overline{z} H z^T$.
    Then on $\mathcal{T}_\alpha$ we have a $\mathbb{C}^\times$-invariant form
    \[\frac{1}{2}\partial\overline{\partial}\log\mathbf{A}.\]
    In the homogeneous coordinates $y = (y_1, \dots, y_{n-3}) = \left(\frac{z_1}{z_{n-2}}, \dots, \frac{z_{n-3}}{z_{n-2}}\right)$
    we can write the corresponding form on $T_\alpha / \mathbb{C}^\times$ as
    \[
        \frac{(d\overline{y}, 0)H(dy, 0)^T (\overline{y}, 1)H(y, 1)^T -
            (d\overline{y}, 0)H(y, 1)^T(\overline{y}, 1)H(dy, 0)^T}{((\overline{y}, 1)H(y, 1)^T)^2}
    \]
    which defines a Hermitian form on $\mathcal{M}_\alpha$.
    The corresponding volume form is
    \[\frac{\det H}{(2i)^{n-3}((\overline{y}, 1)H(y, 1)^T)^{n-2}}dy_1\wedge
    d\overline{y}_1\wedge\dots\wedge dy_{n-3}\wedge d\overline{y}_{n-3}.\]

    We now describe an explicit choice of coordinates $z_1, \dots, z_{n-2}$ on $\mathcal{T}_\alpha$.
    Take the vertex $X_n\in x$ and consider some disjoint non-selfintersecting paths in $S^2$
    from $X_n$ to all the other vertices.
    The complement in $S^2$ of these paths is a disk $D$ and the complement of preimages
    of all paths in $U$ is a disjoint union of disks projecting to $D$.
    Take one of these disks in $U$, see Fig.~\ref{fig:pol}.
    Its boundary has $2(n-1)$ points of $\widehat{x}$, one point from preimage
    of each $X_i$ for $i = 1,\dots,n-1$ and $n-1$ points from preimage of $X_n$ in alternating order.
    Denote the latter points $P_1, \dots, P_{n-1}$.

    \begin{rem}\label{rem:noconfuse}
        The Figure~\ref{fig:pol} might seem quite misleading by its simplicity.
        Firstly, it corresponds the situation when the chosen paths from $X_n$ to other vertices
        are straight line segments.
        It is not generally true that one can choose such a collection of disjoint straight
        line segments \footnote{Though it is true for the Thurston's case when all angle defects $\alpha_i$ are positive.}.
        Moreover, we will need some special cyclic order of these paths at $X_n$, which makes the
        existence of the desired straight line segments more challenging.
        Happily, we do not require our paths to be straight, so the general picture may look more complicated.
        Secondly, it corresponds to the situation where the disk is isometric to some disk in the plane.
        In general, a disk with flat metric is not necessary isometric to a subset of a plane.
        For example, this never happens when one of the cone angles is greater than $2\pi$.
        Again, this does not affect our constructions and proofs.
    \end{rem}

    \begin{figure}[ht]
        \centerline{\definecolor{qqwuqq}{rgb}{0,0.39215686274509803,0}
\definecolor{zzttqq}{rgb}{0.6,0.2,0}
\definecolor{xdxdff}{rgb}{0.49019607843137253,0.49019607843137253,1}
\definecolor{ududff}{rgb}{0.30196078431372547,0.30196078431372547,1}
\begin{tikzpicture}[line cap=round,line join=round,>=triangle 45,x=1cm,y=1cm]
\clip(-9.6,-6) rectangle (1, 3.5);
\fill[line width=2pt,color=zzttqq,fill=zzttqq,fill opacity=0.10000000149011612] (-9.487611590983445,-1.2132981263894362) -- (-7.0608943862521425,0.12168082722345996) -- (-6.943317127439943,2.8888636521445923) -- (-5.32176650662542,1.0675450772689248) -- (-3.767611590983443,2.946701873610564) -- (-3.877661790646921,-0.41229671083925723) -- (-1.4076115909834437,1.8667018736105638) -- (-1.2318159730154206,-0.024458772696855036) -- (0.5923884090165564,-0.5532981263894362) -- (-1.9476241711822107,-1.9073149838557844) -- (-0.5676115909834436,-4.433298126389436) -- (-3.069803371805362,-3.114120044197656) -- (-4.087611590983443,-5.753298126389437) -- (-5.6784735016385595,-3.1953257522427654) -- (-7.527611590983443,-5.573298126389437) -- (-7.161230922854422,-2.788044431542444) -- cycle;
\draw [shift={(-7.0608943862521425,0.12168082722345996)},line width=2pt,color=qqwuqq,fill=qqwuqq,fill opacity=0.10000000149011612] (0,0) -- (87.56697283943201:0.5505281041584563) arc (87.56697283943201:208.8159504538192:0.5505281041584563) -- cycle;
\draw [shift={(-7.161230922854422,-2.788044431542444)},line width=2pt,color=qqwuqq,fill=qqwuqq,fill opacity=0.10000000149011612] (0,0) -- (145.90560397452543:0.5505281041584563) arc (145.90560397452543:262.50616611446736:0.5505281041584563) -- cycle;
\draw [shift={(-5.6784735016385595,-3.1953257522427654)},line width=2pt,color=qqwuqq,fill=qqwuqq,fill opacity=0.10000000149011612] (0,0) -- (-127.86903846883851:0.5505281041584563) arc (-127.86903846883851:-58.121568106366965:0.5505281041584563) -- cycle;
\draw [shift={(-3.069803371805362,-3.114120044197656)},line width=2pt,color=qqwuqq,fill=qqwuqq,fill opacity=0.10000000149011612] (0,0) -- (-111.08930686773265:0.5505281041584563) arc (-111.08930686773265:-27.798602693100417:0.5505281041584563) -- cycle;
\draw [shift={(-1.9476241711822107,-1.9073149838557844)},line width=2pt,color=qqwuqq,fill=qqwuqq,fill opacity=0.10000000149011612] (0,0) -- (-61.351032414707:0.5505281041584563) arc (-61.351032414707:28.060955985355122:0.5505281041584563) -- cycle;
\draw [shift={(-1.2318159730154206,-0.024458772696855036)},line width=2pt,color=qqwuqq,fill=qqwuqq,fill opacity=0.10000000149011612] (0,0) -- (-16.166946304338524:0.5505281041584563) arc (-16.166946304338524:95.31075193211706:0.5505281041584563) -- cycle;
\draw [shift={(-3.877661790646921,-0.41229671083925723)},line width=2pt,color=qqwuqq,fill=qqwuqq,fill opacity=0.10000000149011612] (0,0) -- (42.69626462705595:0.5505281041584563) arc (42.69626462705595:88.12350104055201:0.5505281041584563) -- cycle;
\draw [shift={(-5.32176650662542,1.0675450772689248)},line width=2pt,color=qqwuqq,fill=qqwuqq,fill opacity=0.10000000149011612] (0,0) -- (50.40758154123754:0.5505281041584563) arc (50.40758154123754:131.67921121127873:0.5505281041584563) -- cycle;
\draw [line width=2pt,color=zzttqq] (-9.487611590983445,-1.2132981263894362)-- (-7.0608943862521425,0.12168082722345996);
\draw [line width=2pt,color=zzttqq] (-7.0608943862521425,0.12168082722345996)-- (-6.943317127439943,2.8888636521445923);
\draw [line width=2pt,color=zzttqq] (-6.943317127439943,2.8888636521445923)-- (-5.32176650662542,1.0675450772689248);
\draw [line width=2pt,color=zzttqq] (-5.32176650662542,1.0675450772689248)-- (-3.767611590983443,2.946701873610564);
\draw [line width=2pt,color=zzttqq] (-3.767611590983443,2.946701873610564)-- (-3.877661790646921,-0.41229671083925723);
\draw [line width=2pt,color=zzttqq] (-3.877661790646921,-0.41229671083925723)-- (-1.4076115909834437,1.8667018736105638);
\draw [line width=2pt,color=zzttqq] (-1.4076115909834437,1.8667018736105638)-- (-1.2318159730154206,-0.024458772696855036);
\draw [line width=2pt,color=zzttqq] (-1.2318159730154206,-0.024458772696855036)-- (0.5923884090165564,-0.5532981263894362);
\draw [line width=2pt,color=zzttqq] (0.5923884090165564,-0.5532981263894362)-- (-1.9476241711822107,-1.9073149838557844);
\draw [line width=2pt,color=zzttqq] (-1.9476241711822107,-1.9073149838557844)-- (-0.5676115909834436,-4.433298126389436);
\draw [line width=2pt,color=zzttqq] (-0.5676115909834436,-4.433298126389436)-- (-3.069803371805362,-3.114120044197656);
\draw [line width=2pt,color=zzttqq] (-3.069803371805362,-3.114120044197656)-- (-4.087611590983443,-5.753298126389437);
\draw [line width=2pt,color=zzttqq] (-4.087611590983443,-5.753298126389437)-- (-5.6784735016385595,-3.1953257522427654);
\draw [line width=2pt,color=zzttqq] (-5.6784735016385595,-3.1953257522427654)-- (-7.527611590983443,-5.573298126389437);
\draw [line width=2pt,color=zzttqq] (-7.527611590983443,-5.573298126389437)-- (-7.161230922854422,-2.788044431542444);
\draw [line width=2pt,color=zzttqq] (-7.161230922854422,-2.788044431542444)-- (-9.487611590983445,-1.2132981263894362);
\begin{scriptsize}
\draw [fill=ududff] (-9.487611590983445,-1.2132981263894362) circle (2.5pt);
\draw[shift={(0.1,-0.3)},color=ududff] (-9.319763443723945,-1.4878347759151394) node {$P_{n-1}$};
\draw [fill=ududff] (-6.943317127439943,2.8888636521445923) circle (2.5pt);
\draw[shift={(0.1,-0.3)},color=ududff] (-6.613000264944868,3.118250362210617) node {$P_1$};
\draw [fill=ududff] (-3.767611590983443,2.946701873610564) circle (2.5pt);
\draw[shift={(0.1,-0.3)},color=ududff] (-3.4199372608258223,2.916390057352516) node {$P_2$};
\draw [fill=ududff] (-1.4076115909834437,1.8667018736105638) circle (2.5pt);
\draw[shift={(0.1,-0.3)},color=ududff] (-1.2361757809972789,2.2924582059729315) node {$P_3$};
\draw [fill=ududff] (0.5923884090165564,-0.5532981263894362) circle (2.5pt);
\draw [fill=ududff] (-0.5676115909834436,-4.433298126389436) circle (2.5pt);
\draw [fill=ududff] (-4.087611590983443,-5.753298126389437) circle (2.5pt);
\draw [fill=ududff] (-7.527611590983443,-5.573298126389437) circle (2.5pt);
\draw[shift={(0.1,-0.3)},color=ududff] (-6.989194469453146,-5.323180568219056) node {$P_{n-2}$};
\draw [fill=xdxdff] (-7.0608943862521425,0.12168082722345996) circle (2.5pt);
\draw[shift={(0.1,-0.3)},color=xdxdff] (-6.723105885776559,0.4757154622500237) node {$X_1$};
\draw [fill=xdxdff] (-5.32176650662542,1.0675450772689248) circle (2.5pt);
\draw[shift={(0.1,-0.3)},color=xdxdff] (-5.016468762885345,1.0629454400190443) node {$X_2$};
\draw [fill=xdxdff] (-3.877661790646921,-0.41229671083925723) circle (2.5pt);
\draw[shift={(0.1,-0.3)},color=xdxdff] (-3.4749900712416677,-0.3500766939876618) node {$X_3$};
\draw [fill=xdxdff] (-1.2318159730154206,-0.024458772696855036) circle (2.5pt);
\draw [fill=xdxdff] (-1.9476241711822107,-1.9073149838557844) circle (2.5pt);
\draw [fill=xdxdff] (-3.069803371805362,-3.114120044197656) circle (2.5pt);
\draw [fill=xdxdff] (-5.6784735016385595,-3.1953257522427654) circle (2.5pt);
\draw[shift={(0.1,-0.3)},color=xdxdff] (-5.374312030588341,-2.7540494154795905) node {$X_{n-2}$};
\draw [fill=xdxdff] (-7.161230922854422,-2.788044431542444) circle (2.5pt);
\draw[shift={(0.1,-0.3)},color=xdxdff] (-6.860737911816173,-2.331977868958107) node {$X_{n-1}$};
\draw[shift={(-.4,-0.3)},color=qqwuqq] (-7.4754942947931164,0.5674701462764331) node {$\alpha_1$};
\draw[shift={(-.4,-0.3)},color=qqwuqq] (-7.392915079169347,-3.0660153411693827) node {$\alpha_{n-1}$};
\draw[shift={(-.4,-0.3)},color=qqwuqq] (-5.429364841004187,-3.6899471925489675) node {$\alpha_{n-2}$};
\draw[shift={(-.4,-0.3)},color=qqwuqq] (-3.2180769559677214,0.5491192094711512) node {$\alpha_3$};
\draw[shift={(-.4,-0.3)},color=qqwuqq] (-4.9797668892747815,2.1089488379201127) node {$\alpha_2$};
\end{scriptsize}
\end{tikzpicture}}
        \caption{Polygonal coordinates.}
        \label{fig:pol}
    \end{figure}
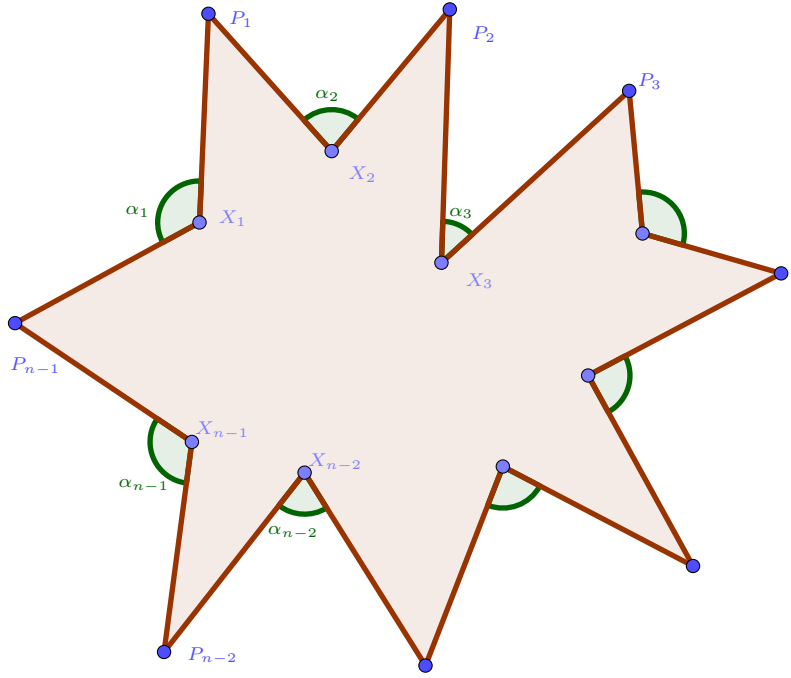

    \begin{proposition}\cite[\S 2.2]{schwartz2015notes},\ \cite[Prop.~3.2, also \S 7]{thurston1998shapes}
        The expression
        \[(g(P_1)-g(P_{n-1}), g(P_2)-g(P_{n-1}), \dots, g(P_{n-2})-g(P_{n-1}))\]
        provides independent coordinates for a developing map $g$ in the space $\mathcal{T}_\alpha$.
    \end{proposition}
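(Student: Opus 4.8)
The plan is to show that the map
\[
    \Phi\colon\ [g]\ \longmapsto\ \bigl(g(P_1)-g(P_{n-1}),\ g(P_2)-g(P_{n-1}),\ \dots,\ g(P_{n-2})-g(P_{n-1})\bigr)
\]
is a $\CC$-linear isomorphism $\mathcal{T}_\alpha\to\CC^{n-2}$. That $\Phi$ is well defined and $\CC$-linear is immediate from the description of $\mathcal{T}_\alpha$ as the space of value-lists $\bigl(g(X)\bigr)_{X\in\widehat{x}}$ of developing maps taken modulo one global translation: each difference $g(P_i)-g(P_{n-1})$ is translation-invariant and linear in the list. Since $\dim_{\CC}\mathcal{T}_\alpha=n-2$ is known (as recalled above), it suffices to prove $\Phi$ injective, i.e.\ that after normalising $g(P_{n-1})=0$ the numbers $g(P_1),\dots,g(P_{n-1})$ determine the whole list $\bigl(g(X)\bigr)_{X\in\widehat{x}}$.

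The core is a reconstruction of $g$ on the points of $\widehat{x}$ lying in the boundary of the distinguished lift $\Pi\subset\widehat{U}$ of the cut-open sphere. Fix $i<n$. The two sides of the $i$-th path lift to two boundary arcs of $\Pi$ that meet at the single lift $X_i$ of the $i$-th cone point; their other endpoints are two of the lifts of $X_n$, say $P_{a(i)}$ and $P_{b(i)}$, where the indices $a(i),b(i)$ depend only on the chosen system of paths and its cyclic order at $X_n$, not on $g$. The deck transformation $\delta_i$ identifying these two arcs fixes $X_i$ and sends $P_{a(i)}$ to $P_{b(i)}$, so its holonomy $\rho(\delta_i)\in\mathrm{Isom}^+(\CC)$ is the rotation carrying $g(P_{a(i)})$ to $g(P_{b(i)})$ through the angle $\pm\alpha_i$ — the rotation angle being prescribed by the last of the four characterising properties of a developing map and nonzero because $0<\alpha_i<2\pi$. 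Hence $g(X_i)$ is the unique fixed point of this rotation,
\[
    g(X_i)\ =\ \frac{e^{\pm\sqrt{-1}\,\alpha_i}\,g(P_{a(i)})-g(P_{b(i)})}{e^{\pm\sqrt{-1}\,\alpha_i}-1}\,,
\]
which is well defined since $\alpha_i\notin 2\pi\ZZ$, and is a $\CC$-affine function of $g(P_{a(i)})$ and $g(P_{b(i)})$ (the two coefficients sum to $1$). Thus $g$ on every point of $\widehat{x}$ in $\partial\Pi$ is a linear function of $\Phi([g])$.

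To conclude, propagate off $\Pi$. Since $\Pi$ covers the disk $D=S^2\setminus(\text{paths})$, a fundamental domain for the action of $\pi_1(S^2\setminus X)$, the $n-1$ side-pairing deck transformations $\delta_1,\dots,\delta_{n-1}$ generate $\pi_1(S^2\setminus X)$; and each $\rho(\delta_i)$ is determined by the $g$-images of the endpoints of the $i$-th pair of arcs, which we have just found. Therefore the holonomy $\rho$ is determined, and as $\widehat{x}$ is the $\pi_1$-orbit of the finitely many points of $\widehat{x}$ in $\overline{\Pi}$, equivariance $g(\gamma\cdot v)=\rho(\gamma)\,g(v)$ determines $g$ on all of $\widehat{x}$. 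So $\Phi$ is injective, hence — being a $\CC$-linear injection of $\mathcal{T}_\alpha$, which has dimension $n-2$, into $\CC^{n-2}$ — an isomorphism, and the listed differences form global coordinates. (Surjectivity may also be seen directly: any assignment of $g(P_1),\dots,g(P_{n-2})$ with $g(P_{n-1})=0$ extends by the same formulas to a cocycle, geometrically to the flat cone sphere glued from the corresponding developed polygon.)

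I expect the real work to be organisational rather than conceptual: fixing the combinatorics of $\partial\Pi$ — which lifts $P_{a(i)},P_{b(i)}$ flank each $X_i$, what the side-pairings $\delta_i$ are, and the correct sign of each angle $\pm\alpha_i$ — compatibly with a chosen orientation, and then checking that these side-pairings generate the deck group. This step genuinely uses the explicit polygonal picture of Figure~\ref{fig:pol} rather than soft arguments, and must be carried out carefully; in particular the developed polygon $g(\Pi)$ is in general only immersed in $\CC$ (cf.\ Remark~\ref{rem:noconfuse}), so its global shape should not be invoked beyond the combinatorial and holonomy data used above.
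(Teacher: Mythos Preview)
The paper does not supply its own proof of this proposition; it is stated with attribution to Schwartz and Thurston and no argument follows, so there is nothing to compare against directly. Your reconstruction argument --- recovering each $g(X_i)$ as the unique fixed point of the side-pairing rotation determined by the two adjacent values $g(P_{a(i)}),g(P_{b(i)})$, then propagating to all of $\widehat{x}$ by equivariance under the deck group generated by these side-pairings --- is correct and is essentially the standard approach found in those references.

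One small caveat worth flagging: you invoke $\dim_{\CC}\mathcal{T}_\alpha=n-2$ as already established in order to deduce bijectivity from injectivity, but in the paper's exposition this dimension count is itself part of what is being imported from the same citations, so the argument as written is mildly circular. For a self-contained proof you should rely on the direct surjectivity sketch in your final parenthetical (any prescribed tuple extends via the fixed-point formulas to a cocycle) rather than on the dimension shortcut. With that adjustment the argument stands on its own.
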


    \begin{lemma}
        In these coordinates the matrix $H$ of the Hermitian form $\mathbf{A}$ is
        \[
            -\frac{1}{4}
            \begin{pmatrix}
                c_1+c_2 & c_2-i   & 0       & \dots  & 0               & 0               \\
                c_2+i   & c_2+c_3 & c_3-i   & \dots  & 0               & 0               \\
                0       & c_3+i   & c_3+c_4 & \dots  & 0               & 0               \\
                \vdots  & \vdots  & \vdots  & \ddots & \vdots          & \vdots          \\
                0       & 0       & 0       & \dots  & c_{n-3}+c_{n-2} & c_{n-2}-i       \\
                0       & 0       & 0       & \dots  & c_{n-2}+i       & c_{n-2}+c_{n-1}
            \end{pmatrix}
        \]
        where $c_i = \cot(\alpha_i / 2)$.
        Its determinant equals
        \[
            \frac{(-1)^{n-1}\sin\left(\frac{\alpha_n}{2}\right)}{4^{n-2}\sin\left(\frac{\alpha_1}{2}\right)\dots
            \sin\left(\frac{\alpha_{n-1}}{2}\right)}.
        \]
    \end{lemma}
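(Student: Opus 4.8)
The plan is to compute the area functional $\mathbf{A}$ directly from a developing map and then read off $H$ as the Gram matrix of the resulting Hermitian form. Fix a lift $\widetilde D\subset U$ of the disk $D=S^2\setminus(\text{cut paths})$. Since $g$ restricts on $\widetilde D$ to an orientation-preserving local isometry into $\mathbb C$, the pullback $g^*(dx\wedge dy)$ is the area form of the flat metric, and $\widetilde D$ covers $S^2$ up to the measure-zero cut set, so with $\partial\widetilde D$ carrying its induced boundary orientation
\[
\mathbf{A}\;=\;\int_{\widetilde D} g^*(dx\wedge dy)\;=\;\tfrac12\,\Im\oint_{\partial\widetilde D}\bar g\,dg
\]
by Stokes' theorem. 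The boundary consists of the $2(n-1)$ edges which are the two sides of the cut paths; in the cyclic order of Figure~\ref{fig:pol} the path from $X_n$ to $X_i$ contributes the edges $P_{i-1}X_i$ and $X_iP_i$, with the convention $P_0:=P_{n-1}$.

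First I record the holonomy at $X_i$: a loop around $X_i$ acts on $g$ by the rotation $R_i(w)=g(X_i)+\mu_i\bigl(w-g(X_i)\bigr)$ with $\mu_i=e^{\pm i\alpha_i}$ (the sign pinned by the orientation of $S^2$ and the prescribed cyclic order of the cuts), and $R_i$ carries one side of the $i$-th cut onto the other. Thus $g(P_{i-1})-g(X_i)=\mu_i\bigl(g(P_i)-g(X_i)\bigr)$, whence
\[
g(X_i)=\frac{g(P_{i-1})-\mu_i\,g(P_i)}{1-\mu_i},\qquad g(P_i)-g(X_i)=\frac{g(P_i)-g(P_{i-1})}{1-\mu_i}.
\]
A short computation shows that the combined contribution of the two sides of the $i$-th cut to $\oint\bar g\,dg$ telescopes to $(1-\mu_i)\,\overline{g(X_i)}\bigl(g(P_i)-g(X_i)\bigr)$: the parametrization of the cut path cancels against its $R_i$-image, leaving only the three vertex positions. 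This is the step that dispenses with the worries of Remark~\ref{rem:noconfuse} — nothing here uses that the cuts are geodesic, nor that $\widetilde D$ embeds in the plane.

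Next I substitute the displayed formulas, normalize $g(P_{n-1})=0$ so that the coordinates are $z_j=g(P_j)$ (with $z_0=z_{n-1}=0$), sum over $i=1,\dots,n-1$, and take $\tfrac12\Im$. Using $\dfrac{1}{1-e^{i\theta}}=\tfrac12\bigl(1+i\cot\tfrac{\theta}{2}\bigr)$ and $\dfrac{e^{i\theta}}{1-e^{i\theta}}=\tfrac12\bigl(-1+i\cot\tfrac{\theta}{2}\bigr)$, the $i$-th cut contributes to $\mathbf{A}$ a term
\[
-\tfrac14\Bigl(c_i\bigl(|z_{i-1}|^2+|z_i|^2\bigr)+(c_i-i)\,\bar z_{i-1}z_i+(c_i+i)\,\bar z_iz_{i-1}\Bigr),\qquad c_i=\cot\tfrac{\alpha_i}{2},
\]
(up to the overall orientation conventions); since only consecutive indices ever interact, summing over $i$ reproduces exactly the stated tridiagonal Hermitian matrix.

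For the determinant I use the standard three-term recurrence for the leading principal minors $D_k$ of this tridiagonal matrix,
\[
D_k=(c_k+c_{k+1})\,D_{k-1}-(c_k^2+1)\,D_{k-2},\qquad D_0=1,\quad D_1=c_1+c_2,
\]
where the product of the two entries between rows $k-1$ and $k$ is $(c_k-i)(c_k+i)=c_k^2+1=1/\sin^2\tfrac{\alpha_k}{2}$. An induction based on the product-to-sum identity (equivalently $\sin(\theta+\tfrac{\alpha_{k+1}}{2})=\sin\theta\cos\tfrac{\alpha_{k+1}}{2}+\cos\theta\sin\tfrac{\alpha_{k+1}}{2}$) gives
\[
D_k=\frac{\sin\frac{\alpha_1+\dots+\alpha_{k+1}}{2}}{\sin\frac{\alpha_1}{2}\cdots\sin\frac{\alpha_{k+1}}{2}},
\]
and specializing $k=n-2$, using $\alpha_1+\dots+\alpha_n=4\pi$ so that $\sin\frac{\alpha_1+\dots+\alpha_{n-1}}{2}=\sin\bigl(2\pi-\tfrac{\alpha_n}{2}\bigr)=-\sin\tfrac{\alpha_n}{2}$, turns $\det H=(-\tfrac14)^{n-2}D_{n-2}$ into the claimed value. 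The one genuine obstacle is bookkeeping rather than ideas: one must fix all orientation conventions consistently (the orientation of $S^2$, the special cyclic order of the cuts at $X_n$ from Figure~\ref{fig:pol}, and the boundary orientation of $\widetilde D$) so that the $\mp i$ off-diagonal entries and the overall factor $-\tfrac14$ emerge exactly as written, and one must keep in mind throughout that $g|_{\widetilde D}$ is only an immersion, so ``area'' always means the multiplicity-counted integral.
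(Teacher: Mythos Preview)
Your argument is correct and close in spirit to the paper's, but the two computations are organized differently. The paper writes the area as the signed area of the ``outer'' polygon $g(P_1)\cdots g(P_{n-1})$ (triangulated from the vertex $P_{n-1}$, which is what produces the $\pm i$ off-diagonal entries via the shoelace formula) minus the areas of the isoceles triangles $g(P_{i-1})\,g(X_i)\,g(P_i)$ (each equal to $\tfrac{c_i}{4}\,|z_i-z_{i-1}|^2$, which produces the cotangent entries). You instead apply Stokes directly to $\partial\widetilde D$ and group the $2(n-1)$ edges by cut, using the holonomy at $X_i$ to collapse each pair to a single Hermitian contribution in $z_{i-1},z_i$. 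Your route has the virtue of making it explicit that nothing depends on the cuts being geodesic or on $\widetilde D$ embedding in the plane (the concern of Remark~\ref{rem:noconfuse}); in the paper's phrasing one must silently read ``area of the polygon'' and ``area of the triangle'' as signed, multiplicity-counted area for the same conclusion to hold. For the determinant the paper just says ``induction on $n$'' after replacing $\alpha_n$ by $4\pi-\alpha_1-\cdots-\alpha_{n-1}$; your explicit closed form $D_k=\sin\tfrac{\alpha_1+\cdots+\alpha_{k+1}}{2}\big/\prod_{j\le k+1}\sin\tfrac{\alpha_j}{2}$ for all leading minors is strictly more informative and reduces that induction to a single product-to-sum identity.
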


    \begin{proof}
        The area of the polygon may be calculated as the area of bigger polygon
        $g(P_1),g(P_2),\dots,g(P_{n-1})$ minus areas of the triangles $g(P_i),g(P_{i+1}),g(X_{i+1})$.
        The former is the sum of areas of the triangles $g(P_{n-1}),g(P_i),g(P_{i+1})$ which equal
        \[
            \frac{-i}{4}\left(\left(\overline{g(P_i)}-\overline{g(P_{n-1})}\right)
            (g(P_{i+1})-g(P_{n-1})) -
            \left(\overline{g(P_{i+1})}-\overline{g(P_{n-1})}\right)(g(P_{i})-g(P_{n-1}))\right).
        \]
        And the latter ones are
        \[
            \frac{c_{i+1}}{4}\left(\overline{g(P_{i+1})}-\overline{g(P_{i})}\right)(g(P_{i+1})-g(P_{i})).
        \]
        Summing these up will give the desired matrix.
        The claim about its determinant can be proved by the induction on $n$ if we replace $\alpha_n$
        with $4\pi-\alpha_1-\dots-\alpha_{n-1}$, which does not change the desired statement.
    \end{proof}

    \section{Truncation of a metric}\label{sec:metrics_truncation}

    In this section we introduce a flow on the moduli space of flat metrics with cone singularities.

    Consider a flat cone sphere $m$ representing an element $\widetilde{m} \in \mathcal{M}_{\varphi, \alpha}$.
    Let $\gamma$ be the shortest geodesic between singular points $X_1$ and $X_2$ on sphere with metric $m$.
    Let $l$ be the length of $\gamma$.
    Let us introduce the following surgery of metric $m$ in the neighborhood of $\gamma$.

    \begin{figure}[h]
        \centerline{\definecolor{zzttqq}{rgb}{0.6,0.2,0}
\definecolor{qqwuqq}{rgb}{0,0.39215686274509803,0}
\definecolor{ududff}{rgb}{0.30196078431372547,0.30196078431372547,1}
\begin{tikzpicture}[line cap=round,line join=round,>=triangle 45,x=.3cm,y=.3cm]
\clip(-15.450759686509766,-8) rectangle (28.69208811963514,8);
\draw [shift={(-10,0)},line width=1pt,color=qqwuqq,fill=qqwuqq,fill opacity=0.10000000149011612] (0,0) -- (-123.6900675259798:1.2680293539809895) arc (-123.6900675259798:123.69006752597976:1.2680293539809895) -- cycle;
\draw [shift={(-5,0)},line width=1pt,color=qqwuqq,fill=qqwuqq,fill opacity=0.10000000149011612] (0,0) -- (71.56505117707799:1.2680293539809895) arc (71.56505117707799:288.434948822922:1.2680293539809895) -- cycle;
\fill[line width=1pt,color=zzttqq,fill=zzttqq,fill opacity=0.10000000149011612] (-14,6) -- (-10,0) -- (-5,0) -- (-3,6) -- cycle;
\fill[line width=1pt,color=zzttqq,fill=zzttqq,fill opacity=0.10000000149011612] (-14,-6) -- (-10,0) -- (-5,0) -- (-3,-6) -- cycle;
\fill[line width=1pt,color=zzttqq,fill=zzttqq,fill opacity=0.10000000149011612] (2,6) -- (3.869365983309148,3.1959510250362784) -- (12.065317008345426,3.1959510250362784) -- (13,6) -- cycle;
\fill[line width=1pt,color=zzttqq,fill=zzttqq,fill opacity=0.10000000149011612] (2,-6) -- (3.869365983309148,-3.1959510250362784) -- (12.065317008345426,-3.1959510250362784) -- (13,-6) -- cycle;
\fill[line width=1pt,color=zzttqq,fill=zzttqq,fill opacity=0.10000000149011612] (16.789864016690853,2.8040489749637216) -- (18.65923,0) -- (26.85518102503628,0) -- (27.789864016690853,2.8040489749637216) -- cycle;
\fill[line width=1pt,color=zzttqq,fill=zzttqq,fill opacity=0.10000000149011612] (16.789864016690853,-2.8040489749637216) -- (18.65923,0) -- (26.85518102503628,0) -- (27.789864016690853,-2.8040489749637216) -- cycle;
\draw [shift={(18.65923,0)},line width=1pt,color=qqwuqq,fill=qqwuqq,fill opacity=0.10000000149011612] (0,0) -- (-123.6900675259798:1.2680293539809895) arc (-123.6900675259798:123.69006752597976:1.2680293539809895) -- cycle;
\draw [shift={(26.85518102503628,0)},line width=1pt,color=qqwuqq,fill=qqwuqq,fill opacity=0.10000000149011612] (0,0) -- (71.56505117707802:1.2680293539809895) arc (71.56505117707802:288.434948822922:1.2680293539809895) -- cycle;
\draw [->,line width=1pt] (-6,20) -- (10,20);
\draw [line width=1pt] (6,0)-- (11,0);
\draw [line width=1pt] (3.869365983309148,3.1959510250362784)-- (6,0);
\draw [line width=1pt] (6,0)-- (3.869365983309148,-3.1959510250362784);
\draw [line width=1pt] (11,0)-- (12.065317008345426,3.1959510250362784);
\draw [line width=1pt] (11,0)-- (12.065317008345426,-3.1959510250362784);
\draw [line width=1pt,dotted] (-10,0)-- (-14,6);
\draw [line width=1pt] (-10,0)-- (-5,0);
\draw [line width=1pt,dotted] (-5,0)-- (-3,6);
\draw [line width=1pt,dotted] (-10,0)-- (-14,-6);
\draw [line width=1pt,dotted] (-3,-6)-- (-5,0);
\draw [line width=1pt,dotted] (3.869365983309148,3.1959510250362784)-- (2,6);
\draw [line width=1pt,dotted] (12.065317008345426,3.1959510250362784)-- (13,6);
\draw [line width=1pt] (3.869365983309148,3.1959510250362784)-- (12.065317008345426,3.1959510250362784);
\draw [line width=1pt,dotted] (3.869365983309148,-3.1959510250362784)-- (2,-6);
\draw [line width=1pt] (3.869365983309148,-3.1959510250362784)-- (12.065317008345426,-3.1959510250362784);
\draw [line width=1pt,dotted] (12.065317008345426,-3.1959510250362784)-- (13,-6);
\draw [line width=1pt,dotted] (16.789864016690853,-2.8040489749637216)-- (18.65923,0);
\draw [line width=1pt,dotted] (18.65923,0)-- (16.789864016690853,2.8040489749637216);
\draw [line width=1pt,dotted] (27.789864016690853,2.8040489749637216)-- (26.85518102503628,0);
\draw [line width=1pt,dotted] (26.85518102503628,0)-- (27.789864016690853,-2.8040489749637216);
\draw [line width=1pt] (18.65923,0)-- (26.85518102503628,0);
\draw [<-, shift={(15.571164908021382,-1.8467791580114366)},line width=1pt]  plot[domain=0.43284915475094893:2.3590740561315817,variable=\t]({1*9.17546655123954*cos(\t r)+0*9.17546655123954*sin(\t r)},{0*9.17546655123954*cos(\t r)+1*9.17546655123954*sin(\t r)});
\draw [->, shift={(15.571164908021382,1.8467791580114366)},line width=1pt]  plot[domain=3.9241112510480045:5.850336152428637,variable=\t]({1*9.17546655123954*cos(\t r)+0*9.17546655123954*sin(\t r)},{0*9.17546655123954*cos(\t r)+1*9.17546655123954*sin(\t r)});
\draw [->, line width=1pt] (-2,0)-- (2,0);
\draw [{Stealth}-{Stealth},line width=.5pt,dash pattern=on 0.5pt off 1pt] (9.19127742585413,3.1959510250362784)-- (9.19127742585413,0);
\draw [{Stealth}-{Stealth},line width=.5pt,dash pattern=on 0.5pt off 1pt] (9.19127742585413,0)-- (9.19127742585413,-3.195951025036279);
\begin{scriptsize}
\draw [fill=ududff] (-10,0) circle (1.5pt);
\draw [fill=ududff] (-5,0) circle (1.5pt);
\draw[shift={(-1.2,-0.2)},color=qqwuqq] (-7.80031591749113,1.980818043396061) node {$\varphi_1$};
\draw[shift={(-1.2,-0.2)},color=qqwuqq] (-4.334369016609758,2.0230856885287603) node {$\varphi_2$};
\draw [fill=ududff] (-6,20) circle (1.5pt);
\draw [fill=ududff] (10,20) circle (1.5pt);
\draw [fill=ududff] (6,0) circle (1.5pt);
\draw [fill=ududff] (11,0) circle (1.5pt);
\draw [fill=ududff] (3.869365983309148,3.1959510250362784) circle (1.5pt);
\draw [fill=ududff] (12.065317008345426,3.1959510250362784) circle (1.5pt);
\draw [fill=ududff] (3.869365983309148,-3.1959510250362784) circle (1.5pt);
\draw [fill=ududff] (12.065317008345426,-3.1959510250362784) circle (1.5pt);
\draw [fill=ududff] (18.65923,0) circle (1.5pt);
\draw [fill=ududff] (26.85518102503628,0) circle (1.5pt);
\draw[shift={(-1.2,-0.2)},color=black] (-7.250836530766033,0.9241269150785709) node {$\gamma$};
\draw[shift={(-1.2,-0.2)},color=qqwuqq] (21.617965094867824,1.4736063018036658) node {$\varphi_1$};
\draw[shift={(-1.2,-0.2)},color=qqwuqq] (26.901420736455282,1.6004092372017644) node {$\varphi_2$};
\draw[shift={(-1.2,-0.2)},color=black] (9.825292102844625,1.9385503982633614) node {r};
\draw[shift={(-1.2,-0.2)},color=black] (9.825292102844625,-1.10472005129101) node {r};
\draw [fill=ududff] (5.869365983309148,3.2359510250362784) node {\small \ScissorRightBrokenBottom};
\draw [fill=ududff] (5.869365983309148,-3.1459510250362784) node {\small \ScissorRightBrokenBottom};
\end{scriptsize}
\end{tikzpicture}}
        \caption{Metric truncation}
        \label{fig:tr}
    \end{figure}
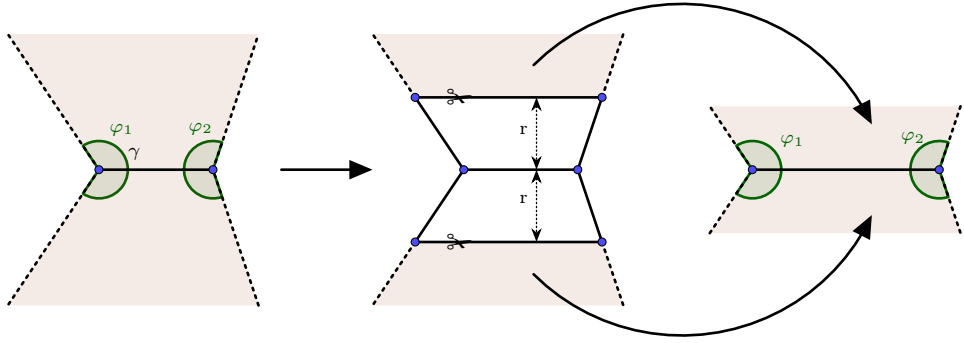

    Some neighborhood of $\gamma$ can be represented as a flat domain obtained from the plane by cutting out
    two sectors with vertices at points $X_1$ and $X_2$ and gluing the sides (see Fig.~\ref{fig:tr}, left).
    Suppose the cut-out sectors are symmetric with respect to line $X_1 X_2$, which means that the segment lies
    on the bisector of both green angles on Fig.~\ref{fig:tr}, left.
    The geodesic $\gamma$ itself is presented as a segment $X_1 X_2$.
    Then we can consider two lines parallel to this segment in equal distance $r$ from it (Fig.~\ref{fig:tr}, middle).
    For small enough $r$ these two lines bound a ``two-angled'' area on the sphere.
    By cutting out this area and by gluing sides we obtain a new flat cone sphere with the
    same list of cone angles (Fig.~\ref{fig:tr}, right).

    There exists a vector field $v$ on $\mathcal{M}_{\varphi, \alpha}$ such that its flow will correspond to metric truncation.
    Precisely, for an element of $\mathcal{M}_{\varphi, \alpha}$ consider a metric $m=m_0$ with length $l_0=l(m_0)$ of the
    shortest geodesic and for small $t$ define $m_t$ as truncation of $m$ by width $t\,l_0$.
    From this we can immediately get the derivative of the length $l(m_t)$ and of the area $a(m_t)$ with respect to $t$:
    \begin{align*}
        \frac{\partial l(m_t)}{\partial t}|_{t=0} &= -t\,l_0\,\left(\cot \frac{\varphi_1}{2} +
        \cot\frac{\varphi_2}{2}\right),\\
        \frac{\partial a(m_t)}{\partial t}|_{t=0}  &= -2t\,l_0^2.
    \end{align*}
    The family $\widetilde{m}_t\in \mathcal{M}_{\varphi,\alpha}$ corresponding to metrics $m_t$ defines a tangent vector
    $v = \frac{\partial\widetilde{m}_t}{\partial t}$ at the point $\widetilde{m}_0\in \mathcal{M}_{\varphi, \alpha}$.

    This construction gives a smooth vector field on $\mathcal{M}_{\varphi, \alpha}$ which is well-defined whenever
    the shortest geodesic between $X_1$ and $X_2$ is unique.
    Denote by $\Omega \subset \mathcal{M}_\alpha$ the complementary set, i.e.\ the set of those metrics where there are
    at leat two shortest geodesics between $X_1$ and $X_2$.
    The structure of $\Omega$ will be explained in the following section, but it is clear now that
    it will be some union of submanifolds of positive codimensions, so the vector field $v$
    is well-defined on an open dense subset of $\mathcal{M}_\alpha$.

    \section{Multiple shortest geodesics}\label{sec:multiple_shortest}

    In this section we describe the set $\Omega$ of elements from $\mathcal{M}_{\varphi,\alpha}$ having at least two
    shortest geodesics between the distinguished points.

    Consider some flat cone sphere $m$ with two shortest geodesics $\gamma_1, \gamma_2$ between
    the distinguished singularities $X_1, X_2$.
    Clearly, it means that they are of equal length and do not have common points except $X_1, X_2$.

    Let us cut our sphere along $\gamma_1, \gamma_2$.
    The sphere splits into two connected components.
    By gluing the sides of each of them we obtain a pair of flat cone spheres $\widehat{m}, \widetilde{m}$.
    The distinguished singularities with angles $\varphi_1, \varphi_2$ of $m$ split into singularities
    with angles $\widehat{\varphi}_1, \widetilde{\varphi}_1,\widehat{\varphi}_2, \widetilde{\varphi}_2$ of which
    $\widehat{\varphi}_1, \widehat{\varphi}_2$ are angles of singularities in $\widehat{m}$ and
    $\widetilde{\varphi}_1, \widetilde{\varphi}_2$ are angles of singularities in $\widetilde{m}$.
    Each of the non-distinguished singularities in $m$ goes either to $\widehat{m}$ or to $\widetilde{m}$.
    Therefore $\widehat{m}, \widetilde{m}$ represent some classes in $\mathcal{M}_{\widehat{\varphi},\widehat{\alpha}}$ and
    $\mathcal{M}_{\widetilde{\varphi},\widetilde{\alpha}}$ respectively with $\widehat{\varphi},\widetilde{\varphi},\widehat{\alpha},\widetilde{\alpha}$ satisfying
    \begin{align*}
        &\varphi_1 = \widehat{\varphi}_1 + \widetilde{\varphi}_1;\\
        &\varphi_2 = \widehat{\varphi}_2 + \widetilde{\varphi}_2;\\
        &\alpha \text{ is the union of } \widehat{\alpha} \text{ and } \widetilde{\alpha}.
    \end{align*}

    \begin{figure}[ht]
        \centerline{\input{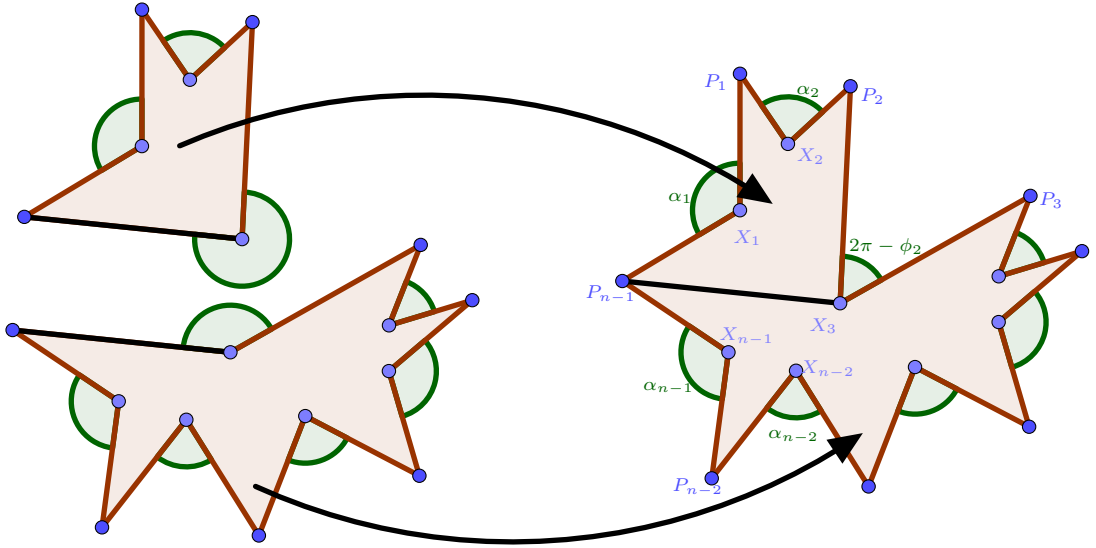}}
        \caption{Gluing of the polygons}
        \label{fig:w}
    \end{figure}

    On the other side, the original flat cone sphere $m$ is completely determined by flat cone spheres
    $m_1, m_2$, provided that the latter ones have unique shortest geodesic between the two marked singular points.
    Moreover, for any pair of elements from $\mathcal{M}_{\widehat{\varphi},\widehat{\alpha}}$ and 
    $\mathcal{M}_{\widetilde{\varphi},\widetilde{\alpha}}$
    one can consider the following procedure.

    \begin{enumerate}
        \item Choose flat cone spheres $m_1$ and $m_2$ representing some elements from
            $\mathcal{M}_{\widehat{\varphi},\widehat{\alpha}}$ and $\mathcal{M}_{\widetilde{\varphi},\widetilde{\alpha}}$
            such that the shortest geodesics between the marked points in $m_1$ and $m_2$ are of equal length.
        \item Cut $m_1$ and $m_2$ along these shortest geodesics.
        \item Glue the resulting discs to each other such that the corresponding marked points stick together.
            There is the only way to do it without change of orientations of the disks.
        \item For resulting flat cone sphere $m$ consider its class in $\mathcal{M}_{\varphi, \alpha}$.
    \end{enumerate}

    This construction gives us a partial map from 
    $\mathcal{M}_{\widehat{\varphi},\widehat{\alpha}}\times \mathcal{M}_{\widetilde{\varphi},\widetilde{\alpha}}$ to $\Omega$.
    This map is well-defined when the corresponding flat cone spheres have unique shortest geodesic between marked singular points.
    Denote sets of such points by $M'_{\widehat{\varphi},\widehat{\alpha}},M'_{\widetilde{\varphi},\widetilde{\alpha}}$.
    Obviously, they are open dense subsets of corresponding moduli spaces.
    If we restrict our map to this set we will have an embedding because our construction is the inverse to splitting construction.
    Therefore, we have proved the following.

    \begin{lemma}\label{omega_structure}
        The set $\Omega\subset \mathcal{M}_{\varphi,\alpha}$ can be represented as
        $\Omega = \left(\bigcup_\PC \Omega_\PC\right) \cup \Omega_0$ where $\Omega_0$ is
        a set of codimension at least two in $\mathcal{M}_\alpha$ and the union is over shuffles $\PC$ of $\alpha$ into
        two lists $\widehat{\alpha}, \widetilde{\alpha}$ and sets $\Omega_\PC$ have the following structure.

        There is a map $p$ from $\Omega_\PC$ to $\mathbb{R}_+$ such that $p^{-1}(\beta)$ is in
        one-to-one correspondence with $M'_{\widehat{\varphi}(\beta), \widehat{\alpha}}\times M'_{\widetilde{\varphi}(\beta),\widetilde{\alpha}}$
        or is empty if one of these moduli spaces is empty.
    \end{lemma}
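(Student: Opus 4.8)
The plan is to stratify $\Omega$ by the number $k\ge 2$ of distinct shortest geodesics between the distinguished points $X_1,X_2$: the stratum $k=2$ will be identified with the sets $\Omega_\PC$ together with the map $p$, and everything with $k\ge 3$ will form $\Omega_0$. The whole argument rests on one geometric input, which I would extract first. \textbf{Rigidity of shortest geodesics.} Any two distinct shortest geodesics $\gamma,\gamma'$ from $X_1$ to $X_2$ have disjoint interiors: they cannot cross transversally at an interior point $p$ (concatenating the arc of $\gamma$ from $X_1$ to $p$ with the arc of $\gamma'$ from $p$ to $X_2$ gives a path of length $\ell(\gamma)$ with a corner at $p$, which can be shortened), and they cannot be tangent or share a sub-arc, since a shortest geodesic never passes through a non-distinguished singularity (positive angle defect, hence cone angle $<2\pi$, so a geodesic through it is not locally minimizing), and in the flat part unique geodesic continuation forces $\gamma=\gamma'$ once a sub-arc is shared. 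Thus $k$ shortest geodesics cut the sphere into exactly $k$ bigon-type regions; a Gauss--Bonnet computation on a region with no interior cone point forces its two sector angles to sum to $0$, which is impossible, so each region carries at least one non-distinguished singularity, and in particular $k\le n$.

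\textbf{The stratum $k=2$ and the map $p$.} For $m$ with exactly two shortest geodesics $\gamma_1,\gamma_2$, cut along $\gamma_1\cup\gamma_2$ and reglue each disk as in the discussion preceding the lemma, getting $\widehat m,\widetilde m$; the induced partition of the non-distinguished indices is a shuffle $\PC=(\widehat\alpha,\widetilde\alpha)$ (an ordered pair once one names the two disks; the other naming gives $\PC^{\mathrm{op}}$, so $\Omega_\PC=\Omega_{\PC^{\mathrm{op}}}$ and the union over shuffles overcounts each metric by the factor $2$ that reappears in Theorem~\ref{thm1}). Gauss--Bonnet on $\widehat m$ gives $\widehat\varphi_1+\widehat\varphi_2=\Sigma\widehat\alpha$, so all angle data is recovered from the single number $\beta:=\widehat\varphi_1$: namely $\widehat\varphi(\beta)=(\beta,\Sigma\widehat\alpha-\beta)$, $\widetilde\varphi(\beta)=(\varphi_1-\beta,\Sigma\widetilde\alpha-\varphi_1+\beta)$, and we set $p(m):=\beta$. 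I would then check that $\widehat m,\widetilde m$ each have a \emph{unique} shortest geodesic: a second one in, say, $\widehat m$ persists as a geodesic of the same length after regluing (enlarging the cone angle at $X_1,X_2$ only helps a path stay locally minimizing), producing a third shortest geodesic of $m$ — excluded. Hence $(\widehat m,\widetilde m)\in M'_{\widehat\varphi(\beta),\widehat\alpha}\times M'_{\widetilde\varphi(\beta),\widetilde\alpha}$. Conversely, the gluing construction rescales such a pair so the two unique shortest geodesics have equal length, glues, and renormalizes the area, producing a well-defined element of $\Omega_\PC$ with $p$-value $\beta$; cutting and gluing being mutually inverse, this gives the claimed bijection $p^{-1}(\beta)\leftrightarrow M'_{\widehat\varphi(\beta),\widehat\alpha}\times M'_{\widetilde\varphi(\beta),\widetilde\alpha}$, which is empty exactly when one factor's moduli space is (e.g.\ a trivial shuffle, or $\beta$ making some angle non-positive).

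\textbf{Codimension of $\Omega_0$.} Set $\Omega_0:=\{m:\ m\text{ has }\ge 3\text{ shortest geodesics}\}$. For $m$ with exactly $k\ge 3$ of them, the same cut-and-reglue yields $k$ flat cone spheres $m^{(1)},\dots,m^{(k)}$, each with a unique shortest geodesic and each carrying $\ge 1$ non-distinguished singularity; the discrete data (the value of $k$ and the cyclically ordered partition of $\alpha$ into the $\alpha^{(i)}$) takes finitely many values, and for each fixed choice the construction is again a two-sided inverse to cutting. So the corresponding stratum is a manifold fibred over an open subset of the space of admissible angle-splittings at $X_1$ — which has dimension $k-1$, because the splitting at $X_2$ is then forced piece-by-piece by Gauss--Bonnet — with fibre $\prod_{i=1}^k M'_{\varphi^{(i)},\alpha^{(i)}}$. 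Its real dimension is $(k-1)+\sum_{i=1}^k 2(\#\alpha^{(i)}-1)=(k-1)+2n-2k=2n-k-1$, whereas $\dim_{\RR}\mathcal M_{\varphi,\alpha}=2\big((n+2)-3\big)=2n-2$. Thus the stratum has codimension $k-1\ge 2$, so $\Omega_0$, a finite union of such strata, has codimension at least two; the same count with $k=2$ recovers that each $\Omega_\PC$ has codimension one.

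\textbf{Main obstacle.} The technical heart is the rigidity step together with the claim that cutting and regluing are mutually inverse and carry the "exactly $k$ shortest geodesics" locus bijectively onto the parametrized stratum — in particular that geodesic-ness survives both the angle-increasing regluing and the angle-decreasing cutting, and that no tangencies or shared sub-arcs occur (so the regions really are honest bigons and the pieces land in $M'$). Once this is established, the decomposition of $\Omega$, the description of the fibres of $p$, and the dimension count are bookkeeping on the constructions already assembled in Sections~\ref{sec:metrics_truncation}--\ref{sec:multiple_shortest}.
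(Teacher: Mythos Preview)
Your proposal is correct and follows the same cut-and-reglue strategy as the paper: the paper's ``proof'' is precisely the discussion in Section~\ref{sec:multiple_shortest} preceding the lemma, which describes the splitting and its inverse gluing and then declares the lemma proved. You supply the pieces the paper leaves implicit --- the rigidity of shortest geodesics, the Gauss--Bonnet check that each bigon carries a cone point, the verification that each glued piece lands in $M'$, and the explicit dimension count giving codimension $k-1$ for the $k$-geodesic stratum --- so your argument is a fleshed-out version of the paper's sketch rather than a different route.
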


    In polygonal coordinates this splitting-gluing construction looks like at Figure~\ref{fig:w},
    two polygons are glued by sides of equal length.
    Left side shows two polygons corresponding to some flat cone sphere, right side shows the result of the gluing.
    The angles marked in the figure correspond to angle defects of the flat cone spheres.
    Note that during the gluing procedure the area functions add up, i.e.
    $\mathbf{a}(m) = \mathbf{a}(m_1) + \mathbf{a}(m_2)$.

    \section{Proof of Theorem 1}\label{sec:proof_t1}
    Theorem 1 can be proved using the Stokes' theorem for the form $\iota_{v}\mathrm{Vol}$ on $\mathcal{M}_{\varphi, \alpha}$.
    Recall that on the stratified submanifold $\Omega$ the vector field $v$, and thus
    the form $\iota_{v}\mathrm{Vol}$, has a discontinuity.
    Denote by $\mathcal{M}_{\varphi,\alpha}^\Omega$ the result of cutting $\mathcal{M}_{\varphi,\alpha}$ by $\Omega$.

    \begin{lemma}
        \label{st}
        Let $g$ be a smooth function on $\mathbb{R}_+$ with compact support.
        Then
        \begin{align}
            \int_{\mathcal{M}_{\varphi,\alpha}}d\iota_{g(\mathbf{a})v}\mathrm{Vol}=
            \int_{\partial (\mathcal{M}_{\varphi,\alpha}^\Omega)}\iota_{g(\mathbf{a})v}\mathrm{Vol}
        \end{align}
    \end{lemma}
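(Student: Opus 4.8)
The plan is to obtain the identity from Stokes' theorem applied on the cut manifold $\mathcal{M}_{\varphi,\alpha}^\Omega$, together with a vanishing argument at the ends of the (non-compact, finite-volume) moduli space. First I would record the regularity of the objects. Away from $\Omega$ the shortest geodesic between the two distinguished points is unique, so $\bm{l}$ — and hence $\mathbf{a}=1/\bm{l}^2$ — is smooth on $\mathcal{M}_{\varphi,\alpha}\setminus\Omega$; together with the smoothness of $v$ there (Section~\ref{sec:metrics_truncation}) this makes $\iota_{g(\mathbf{a})v}\mathrm{Vol}$ a smooth codimension-one form on $\mathcal{M}_{\varphi,\alpha}\setminus\Omega$ with $d\iota_{g(\mathbf{a})v}\mathrm{Vol}=\mathcal{L}_{g(\mathbf{a})v}\mathrm{Vol}$. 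By Lemma~\ref{omega_structure}, $\Omega$ is the union of the codimension-one strata $\Omega_\PC$ and a set $\Omega_0$ of codimension $\ge 2$; the latter, together with the lower-dimensional locus where the $\Omega_\PC$ meet or fail to be submanifolds, is Lebesgue-null and so irrelevant for both sides of the asserted equality. On $\mathcal{M}_{\varphi,\alpha}^\Omega$, obtained by slitting $\mathcal{M}_{\varphi,\alpha}$ along $\bigcup_\PC\Omega_\PC$, the field $v$ and the function $\mathbf{a}$ extend smoothly from each side (on $\Omega_\PC$ there are exactly two shortest geodesics, and either choice gives a smooth one-sided truncation flow), so $\iota_{g(\mathbf{a})v}\mathrm{Vol}$ extends smoothly up to the boundary, and $\partial(\mathcal{M}_{\varphi,\alpha}^\Omega)$ is the disjoint union of the two copies $\Omega_\PC^{\pm}$ of each $\Omega_\PC$.

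Next I would fix a smooth proper exhaustion $K_1\subset K_2\subset\cdots$ of $\mathcal{M}_{\varphi,\alpha}$ by compact sets whose complements are disjoint neighborhoods of the ends, and apply Stokes on $K_N\cap\mathcal{M}_{\varphi,\alpha}^\Omega$:
\[
\int_{K_N\cap\mathcal{M}_{\varphi,\alpha}^\Omega}d\iota_{g(\mathbf{a})v}\mathrm{Vol}
=\int_{\partial(\mathcal{M}_{\varphi,\alpha}^\Omega)\cap K_N}\iota_{g(\mathbf{a})v}\mathrm{Vol}
+\int_{\partial K_N}\iota_{g(\mathbf{a})v}\mathrm{Vol}.
\]
Since $\mathrm{Vol}(\mathcal{M}_{\varphi,\alpha})<\infty$ (Thurston, Veech), $g$ is bounded with compact support, and $v$ is locally bounded, the left integrand is dominated by an integrable function, so the left side and the first term on the right converge as $N\to\infty$ to the two integrals appearing in the lemma. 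Hence it remains to show that the flux $\int_{\partial K_N}\iota_{g(\mathbf{a})v}\mathrm{Vol}$ through the truncation hypersurfaces tends to $0$.

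This last estimate is where the work lies, and I would split the ends of $\mathcal{M}_{\varphi,\alpha}$ into two types. Along an end where the two distinguished singularities come together, $\bm{l}\to 0$, hence $\mathbf{a}\to\infty$; since $\operatorname{supp}g$ is compact, $g(\mathbf{a})$ vanishes in a neighborhood of such an end and its contribution to the flux is $0$ for large $N$. Along every other end the degenerating cluster of cone points stays bounded away from the distinguished points and from their shortest geodesic, so the truncation surgery takes place in a region untouched by the degeneration; in the polygonal coordinates of Section~\ref{sec:polygonal_coordinates} the coordinates controlling the collision are not moved by $v$, the remaining components of $v$ stay bounded, and $|\iota_{v}\mathrm{Vol}|$ is integrable near the end because $\mathrm{Vol}$ is. A coarea argument then produces an exhaustion along which $\int_{\partial K_N}\iota_{g(\mathbf{a})v}\mathrm{Vol}\to 0$; since the left-hand side of the displayed equation has a limit independent of the exhaustion, the flux tends to $0$ for every exhaustion, and passing to the limit gives the claim. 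The genuinely delicate point is exactly this geometric estimate — controlling $v$ and the volume form near the ends of $\mathcal{M}_{\varphi,\alpha}$ that the distinguished points do not ``feel'' — since the mere compact support of $g$ does not make $\mathbf{a}^{-1}(\operatorname{supp}g)$ compact in $\mathcal{M}_{\varphi,\alpha}$.
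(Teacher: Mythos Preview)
Your argument is correct and follows the same overall strategy as the paper: apply Stokes on a compact piece of $\mathcal{M}_{\varphi,\alpha}^\Omega$ and show that the extra boundary flux at the ends tends to zero. The difference is in how that flux is killed. The paper invokes Thurston's theorem that the metric completion $\overline{\mathcal{M}}_{\varphi,\alpha}$ is a complex hyperbolic cone manifold whose singular set has real codimension $\ge 2$; it then cuts out an $\epsilon$-neighborhood of that singular set, notes that $v$ and its divergence are bounded in polygonal coordinates, and concludes because the $(2n-3)$-area of the $\epsilon$-sphere around a codimension-$\ge 2$ stratum tends to zero. You instead work directly with a compact exhaustion and split the ends geometrically: at ends where the two distinguished points collide you use $\mathbf a\to\infty$ and the compact support of $g$, and at the remaining ends you argue that $v$ is bounded and $\iota_v\mathrm{Vol}$ is integrable, then appeal to coarea. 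Your route avoids citing the cone-manifold completion but trades that for the end-by-end estimate you flag as the delicate point; the paper's route packages exactly that estimate into the single structural fact ``singular set of codimension $\ge 2$ in the completion plus $v$ bounded in polygonal coordinates''.
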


    \begin{proof}
        By Thurston's theorem, the metric completion $\overline{M}_{\varphi,\alpha}$ of the moduli space is a complex hyperbolic cone manifold, so its singular set is a union of strata of real codimension at least two.

        Let us cut out $\epsilon$-neighborhood of the singular set from $\overline{M}_{\varphi,\alpha}$ and cut the result by $\Omega$.

        The resulting space $\mathcal{M}_{\varphi,\alpha}^\Omega(\epsilon)$ is a compact manifold with boundary.
        So there holds the Stokes theorem for the form $\iota_{g(\mathbf{a})v}\mathrm{Vol}$, namely
        \begin{align}
            \label{eps-stokes}\int_{\mathcal{M}_{\varphi,\alpha}^\Omega(\epsilon)}d\iota_{g(\mathbf{a})v}\mathrm{Vol}=\int_{\partial \mathcal{M}_{\varphi,\alpha}^\Omega(\epsilon)}\iota_{g(\mathbf{a})v}\mathrm{Vol}.
        \end{align}

        Note that the vector field $v$ as well as its divergence is bounded in polygonal coordinates.
        Then its integral over $\epsilon$-neighborhood of singular set is bounded by the volume of this neighborhood multiplied by some constant.
        Therefore, it tends to zero as $\epsilon\to 0$.

        Also, the flow of $v$ through the boundary of the $\epsilon$-neighborhood tends to zero because the area of this boundary tends to zero.

        And the part of $\partial(\mathcal{M}_{\varphi,\alpha}^\Omega(\epsilon))$ coming from points of $\Omega$ is $\partial(\mathcal{M}_{\varphi,\alpha}^\Omega)$ without the area that tends to zero.
        Therefore, as the limit of equation~\ref{eps-stokes} we have our Lemma.

    \end{proof}

    The following two lemmas will be proved in the subsequent sections.

    \begin{lemma}
        \label{div}
        For $g$ as above the following holds:
        \[
            d\iota_{g(\mathbf{a})v}\mathrm{Vol} =
            2\left(\left(\frac{n}{\mathbf{a}} + \cot\frac{\varphi_1}{2}+\cot\frac{\varphi_2}{2}\right)g(\mathbf{a}) +
            \left(\left(\cot\frac{\varphi_1}{2}+\cot\frac{\varphi_2}{2}\right)\mathbf{a} - 1\right)g'(\mathbf{a})\right)\mathrm{Vol}.
        \]
    \end{lemma}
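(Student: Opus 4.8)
The plan is to turn the statement into a divergence computation. Since $\mathrm{Vol}$ is a top-degree form, $d\,\mathrm{Vol}=0$, so Cartan's formula gives $d\iota_{g(\mathbf a)v}\mathrm{Vol}=\mathcal L_{g(\mathbf a)v}\mathrm{Vol}=\operatorname{div}(g(\mathbf a)v)\,\mathrm{Vol}$, the divergence being taken with respect to $\mathrm{Vol}$, and by the Leibniz rule $\operatorname{div}(g(\mathbf a)v)=g(\mathbf a)\operatorname{div}v+g'(\mathbf a)\,v(\mathbf a)$. Comparing with the claimed formula, it suffices to prove
\[
v(\mathbf a)=2\bigl(q(\varphi)\,\mathbf a-1\bigr),\qquad
\operatorname{div}v=\frac{2n}{\mathbf a}+2q(\varphi).
\]

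I would establish both by lifting the truncation family to the Teichmüller space. Recall from Section~\ref{sec:polygonal_coordinates} that $\mathcal T:=\mathcal T_{(2\pi-\varphi_1,2\pi-\varphi_2,\alpha_1,\dots,\alpha_n)}$ is (locally) the affine space $\CC^{n}$ with the polygonal coordinates $z_1,\dots,z_n$, that $\mathbf A(z)=\overline z\,H\,z^{T}$ is the area of the corresponding cone sphere, and let $d(z)$ be the distance between the images of the two distinguished singularities, so that $\mathbf a=\mathbf A/d^{2}$ descends to $\mathcal M_{\varphi,\alpha}$. For a developing map $z$, truncating its cone sphere by the width $t\,d(z)$ gives a one-parameter family which commutes with the homotheties $z\mapsto\lambda z$ of the plane (the surface and the truncation width rescale together), so the induced vector field $\widehat v$ on $\mathcal T$ is $\CC$-homogeneous of degree one, hence $\CC$-linear: $\widehat v(z)=Az$ for a matrix $A$ that is constant on each chamber where the combinatorial type of the shortest geodesic is fixed. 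In these terms the variation formulas of Section~\ref{sec:metrics_truncation} read $\widehat v(\log d)=-q(\varphi)$ and $\widehat v(\log\mathbf A)=-2/\mathbf a$, whence $\widehat v(\log\mathbf a)=2q(\varphi)-2/\mathbf a$; since $\mathbf a$ is $\CC^{\times}$-invariant and $\widehat v$ projects to $v$, this is the first identity.

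For the divergence I would pull $\mathrm{Vol}$ back to $\mathcal T^{\circ}=\mathcal T\setminus\{0\}$, which fibers over $\mathcal M_{\varphi,\alpha}$ with $\CC^{\times}$-fibers. Using the explicit volume form of Section~\ref{sec:polygonal_coordinates} and the coordinate $w=z_{n}$ along the fiber, a short computation gives $\mathbf A(z)^{-n}\Lambda=c\,(\pi^{*}\mathrm{Vol})\wedge\tfrac{i}{2}\tfrac{dw\wedge d\overline w}{|w|^{2}}$ for a constant $c$, where $\Lambda$ is Lebesgue measure on $\CC^{n}$ and $\pi\colon\mathcal T^{\circ}\to\mathcal M_{\varphi,\alpha}$ is the projection. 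Since $\mathbf A^{-n}\Lambda$ is $\CC^{\times}$-invariant while $\widehat v$ is $\CC^{\times}$-equivariant and projects to $v$, one gets $\operatorname{div}_{\mathrm{Vol}}(v)=\operatorname{div}_{\mathbf A^{-n}\Lambda}(\widehat v)$; and
\[
\operatorname{div}_{\mathbf A^{-n}\Lambda}(\widehat v)=\operatorname{div}_{\Lambda}(Az)-n\,\widehat v(\log\mathbf A)=2\,\Re\operatorname{tr}A+\frac{2n}{\mathbf a},
\]
using that a $\CC$-linear vector field $z\mapsto Az$ has real divergence $2\Re\operatorname{tr}A$. Thus the second identity reduces to the single equation $\Re\operatorname{tr}A=q(\varphi)=\cot\frac{\varphi_1}{2}+\cot\frac{\varphi_2}{2}$.

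This last equation is the heart of the matter, and it is the step I expect to be the main obstacle. Equivalently, $\operatorname{tr}A$ is the rate at which truncation rescales the holomorphic volume $dz_1\wedge\dots\wedge dz_n$ on $\mathcal T$, which I would compute by tracking how the polygonal coordinates transform when a thin symmetric strip around $\gamma$ is excised and its two sides reglued. Because the surgery is supported near $\gamma$, this reduces to a computation in a developed neighborhood of $\gamma$: the contributions to $\Re\operatorname{tr}A$ localize at the two distinguished cone points, and the one at $X_i$ should equal $\cot\frac{\varphi_i}{2}$ — the same cotangent governing the variation of $\bm{l}$ in Section~\ref{sec:metrics_truncation}, arising because the coordinates that change are sheared with the slope dictated by the cone angle $\varphi_i$ (the line parallel to $\gamma$ at distance $r$ meets the two edges of the cone at $X_i$ at the point lying $r\cot\frac{\varphi_i}{2}$ along $\gamma$). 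Summing the two contributions yields $\Re\operatorname{tr}A=q(\varphi)$. The difficulty lies in pinning down precisely which $z_j$ are affected by the surgery and carrying out this bookkeeping cleanly — in particular, handling the configurations where $\varphi_i\ge2\pi$, so that the polygon need not embed in the plane (cf.\ Remark~\ref{rem:noconfuse}), and checking that $\Re\operatorname{tr}A$ does not depend on the chamber, which is in any case forced by the coordinate-independence of $\operatorname{div}_{\mathrm{Vol}}v$.
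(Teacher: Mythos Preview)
Your reduction is exactly the one the paper makes: write $d\iota_{g(\mathbf a)v}\mathrm{Vol}=\bigl(g(\mathbf a)\operatorname{div}v+g'(\mathbf a)\,v(\mathbf a)\bigr)\mathrm{Vol}$ and compute the two pieces separately, $v(\mathbf a)$ coming from the variation formulas for $l$ and $\mathbf A$ already recorded in Section~\ref{sec:metrics_truncation}. The paper also writes the divergence as $2\Re(\partial v_1/\partial w_1)-n\,\partial_v\mathbf A/\mathbf A$, which is your $2\Re\operatorname{tr}A+2n/\mathbf a$. Up to this point your route and the paper's coincide; your lift-to-$\mathcal T$ formulation is sound (the ambiguity in the lift is by Euler-type fields, whose $\mathbf A^{-n}\Lambda$-divergence vanishes).

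Where you diverge is precisely the step you flag as hardest, $\Re\operatorname{tr}A=q(\varphi)$. The paper does \emph{not} attempt a localisation argument. Instead it chooses polygonal coordinates adapted to $\gamma$: the paths from the base vertex $Y_n$ are arranged so that $Y_nX_1$, $Y_nX_2$ and $\gamma$ bound a triangle disjoint from all the other paths (Figure~\ref{fig:t}). In these coordinates the truncation surgery moves exactly one polygon vertex $P_1$ and leaves $P_0,P_2,\dots,P_n$ fixed. Hence $A$ has a single nonzero row, the trace is the single entry $\partial v_1/\partial w_1$, and from the explicit expression
\[
v_1=2i\Bigl(\frac{w_1}{1-e^{i\varphi_1}}-\frac{w_1-e^{-i\varphi_2}w_2}{1-e^{-i\varphi_2}}\Bigr)
\]
one reads off $2\Re(\partial v_1/\partial w_1)=\cot\tfrac{\varphi_1}{2}+\cot\tfrac{\varphi_2}{2}$ in one line. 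This coordinate choice simultaneously removes all the bookkeeping you were worried about: there is no need to track which $z_j$ are affected, no case split for $\varphi_i\ge 2\pi$ (cf.\ Remark~\ref{rem:noconfuse}: the paths need not be straight, only the combinatorics of the cut system matters), and no chamber-independence to verify. So your proposal is correct in outline, but the heuristic you sketch for the trace is unnecessary work; the paper's insight is that an adapted coordinate system collapses the trace to a single explicit term.
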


    \begin{lemma}\label{integrand}
        Let $\PC=(\widehat{\alpha},\widetilde{\alpha})$ be a shuffle of $\alpha$.
        Using the function $\beta$ on $\Omega_\PC$ with level set
        $M'_{\widehat{\varphi}(\beta), \widehat{\alpha}}\times M'_{\widetilde{\varphi}(\beta),\widetilde{\alpha}}$
        from Lemma~\ref{omega_structure}, the integrand over $\Omega_\PC$ in the right hand side of the formula in Lemma~\ref{st} is
        \[
            2\frac{\mathbf{a}_1^{n_1}\mathbf{a}_2^{n_2}}{\mathbf{a}^{n}}
            g(\mathbf{a})\,
            \varkappa(\PC,\beta)\,
            \mathrm{d}\beta\wedge \mathrm{Vol}_1\wedge \mathrm{Vol}_2
        \]
        where $\varkappa = \cot\frac{\beta}{2} +\cot\frac{\varphi_2-\beta}{2} +
                           \cot\frac{\Sigma\widehat{\alpha}-\beta}{2} +\cot\frac{\varphi_1-\Sigma\widehat{\alpha}+\beta}{2}$
        and $\mathrm{Vol}_{1,2}$ are the volume forms on $\mathcal{M}_{\widehat{\varphi}(\beta),\widehat{\alpha}}, \mathcal{M}_{\widetilde{\varphi}(\beta),\widetilde{\alpha}}$ and $\mathbf{a}_{1,2}$ are the area functions there.
    \end{lemma}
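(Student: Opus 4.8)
The statement is local along a generic point of $\Omega_\PC$, where the sphere $m$ has exactly two shortest geodesics $\gamma_1,\gamma_2$ between the distinguished points, none of the cone points of $\alpha$ lies on them, and $m$ is reconstructed from a pair $(m_1,m_2)\in M'_{\widehat{\varphi}(\beta),\widehat{\alpha}}\times M'_{\widetilde{\varphi}(\beta),\widetilde{\alpha}}$ as in Lemma~\ref{omega_structure}. The plan is to evaluate $\iota_{g(\mathbf a)v}\mathrm{Vol}$ on $\partial(\mathcal{M}^\Omega_{\varphi,\alpha})$ along this locus and to isolate the three factors $2$, $\mathbf a_1^{n_1}\mathbf a_2^{n_2}/\mathbf a^{n}$ and $\varkappa$ one at a time.

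\emph{The factor $2$.} Near $\Omega_\PC$ the set $\partial(\mathcal{M}^\Omega_{\varphi,\alpha})$ consists of two copies of $\Omega_\PC$ with opposite orientations: on one the limiting value of $v$ is the truncation flow $v_1$ along $\gamma_1$, on the other it is the truncation flow $v_2$ along $\gamma_2$. Since only the component of $v$ transverse to $\Omega_\PC$ contributes to $\iota_v\mathrm{Vol}|_{\Omega_\PC}$, it is enough to understand the transverse speed. Relabelling $\gamma_1\leftrightarrow\gamma_2$ is the identity on $(\beta,m_1,m_2)$ but exchanges the two boundary copies and exchanges $v_1\leftrightarrow v_2$; in particular it identifies the rate at which truncation along $\gamma_1$ changes the length of $\gamma_2$ with the rate at which truncation along $\gamma_2$ changes the length of $\gamma_1$, while truncation of a geodesic changes its own length at the rate dictated by \S\ref{sec:metrics_truncation}. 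Consequently the transverse speeds of $v_1$ and $v_2$ are opposite, $\iota_{v_2}\mathrm{Vol}|_{\Omega_\PC}=-\iota_{v_1}\mathrm{Vol}|_{\Omega_\PC}$, and the two boundary copies contribute equally, giving the factor $2$. As $\mathbf a=\mathbf a_1+\mathbf a_2$ on $\Omega_\PC$ (areas add under the gluing, \S\ref{sec:multiple_shortest}), the scalar $g(\mathbf a)$ factors out, and it remains to show $\iota_{v_1}\mathrm{Vol}|_{\Omega_\PC}=\pm\,\tfrac{\mathbf a_1^{n_1}\mathbf a_2^{n_2}}{\mathbf a^{n}}\,\varkappa(\PC,\beta)\,\mathrm d\beta\wedge\mathrm{Vol}_1\wedge\mathrm{Vol}_2$.

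\emph{The volume form in adapted coordinates.} Choose the spanning paths of \S\ref{sec:polygonal_coordinates} so that, as in Figure~\ref{fig:w}, the developing polygon of $m$ is the union of the developing polygons of $m_1$ and $m_2$ glued along the edge $w$ that develops $\gamma_1$; after a linear change of coordinates one may take $w$ to be one of the polygonal coordinates of each of the three spaces, so it is literally shared. From the explicit volume form and the determinant of $H$ in \S\ref{sec:polygonal_coordinates} one obtains, for each of $\mathcal{M}_{\varphi,\alpha}$, $\mathcal{M}_{\widehat{\varphi}(\beta),\widehat{\alpha}}$, $\mathcal{M}_{\widetilde{\varphi}(\beta),\widetilde{\alpha}}$, an identity
\[
    \mathrm d\mathrm{Leb}_{\mathcal{T}}=\pm\,\frac{\mathbf A^{\,k}}{\det H}\cdot\frac1{|w|^{2}}\;\mathrm{Vol}\wedge\tfrac{i}{2}\,\mathrm dw\wedge\mathrm d\bar w,
\]
where $\mathbf A$ is the area, $k\in\{n,n_1,n_2\}$, and $\det H$ is a constant equal to a ratio of sines of half angles. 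Because the developing data of $m$ on $\Omega_\PC$ splits as the developing data of $m_1$ and that of $m_2$ sharing only $w$, Lebesgue measure on $\mathcal{T}_{\varphi,\alpha}$ decomposes accordingly, and combining the three identities with $\mathbf A=\mathbf A_1+\mathbf A_2$ gives, on the preimage of $\Omega_\PC$ and modulo the fibre directions $\mathrm dw,\mathrm d\bar w$,
\[
    \mathrm{Vol}_{\varphi,\alpha}\ \equiv\ \frac{\det H_{\varphi,\alpha}}{\det\widehat H\,\det\widetilde H}\cdot\frac{\mathbf a_1^{\,n_1}\mathbf a_2^{\,n_2}}{\mathbf a^{\,n}}\cdot\frac1{|w|^{2n-2}}\;J\;\mathrm d\beta\wedge\mathrm du\wedge\mathrm{Vol}_1\wedge\mathrm{Vol}_2,
\]
once the pieces are normalised so that $\gamma_1$ has unit length (so $\mathbf A_i=\mathbf a_i$, $\mathbf A=\mathbf a$), where $u$ is a coordinate transverse to $\Omega_\PC$ and $J$ is the Jacobian of the change from the developing edge $w$ to the pair $(\beta,u)$. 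This produces the factor $\mathbf a_1^{n_1}\mathbf a_2^{n_2}/\mathbf a^{n}$; the remaining prefactor $J\,|w|^{-(2n-2)}\det H_{\varphi,\alpha}/(\det\widehat H\det\widetilde H)$ is scale-invariant and is a product of sines of half angles built from $\varphi_1,\varphi_2$ and the four cut angles $\widehat{\varphi}_1,\widehat{\varphi}_2,\widetilde{\varphi}_1,\widetilde{\varphi}_2$.

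\emph{The factor $\varkappa$ and conclusion.} Descending to $\Omega_\PC$ gives $\iota_{v_1}\mathrm{Vol}|_{\Omega_\PC}=v_1(u)\,(\iota_{\partial_u}\mathrm{Vol})|_{\Omega_\PC}$. The transverse speed $v_1(u)$ is read off from a local model of $m$ near the pair $\gamma_1,\gamma_2$: the strip removed around $\gamma_1$ under the truncation flow encroaches on the $\widehat{\alpha}$–lune and the $\widetilde{\alpha}$–lune near $X_1$ and near $X_2$ by amounts governed by the four cut angles, so truncation of $m$ along $\gamma_1$ is the simultaneous truncation of $m_1$ and $m_2$ along their shortest geodesics; hence, by \S\ref{sec:metrics_truncation} applied to the two pieces, the net transverse displacement rate is governed by $q(\widehat{\varphi})+q(\widetilde{\varphi})=\varkappa(\PC,\beta)$. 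Assembling $v_1(u)$, the prefactor above and the factor $2$, all sine factors cancel and one is left with the asserted $2\,\tfrac{\mathbf a_1^{n_1}\mathbf a_2^{n_2}}{\mathbf a^{n}}\,g(\mathbf a)\,\varkappa(\PC,\beta)\,\mathrm d\beta\wedge\mathrm{Vol}_1\wedge\mathrm{Vol}_2$. The main obstacle is exactly this last cancellation: one must check that the transverse flux of the discontinuous truncation field, the Jacobian of the change from the developing edge $w$ to $(\beta,u)$, and the ratio of Hermitian determinants combine so that every trigonometric factor except the single cotangent sum $\varkappa=q(\widehat{\varphi})+q(\widetilde{\varphi})$ disappears; this requires a precise local description of the flat metric in a neighbourhood of the two shortest geodesics and careful tracking of orientations.
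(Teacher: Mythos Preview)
Your plan has the right architecture and, in broad strokes, matches what the paper does: work locally at a generic point of $\Omega_\PC$, pick adapted coordinates in which the developing polygon of $m$ is the union of the polygons of $m_1,m_2$ glued along a common edge, factor $\iota_v\mathrm{Vol}|_{\Omega_\PC}$ into (i) a factor $2$ from the two boundary sheets, (ii) the area ratio $\mathbf a_1^{n_1}\mathbf a_2^{n_2}/\mathbf a^n$, and (iii) the cotangent sum $\varkappa$. Your symmetry argument for the factor $2$ is exactly the paper's one-line ``by the symmetry, the integrand from $v_2$ is the same''.

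Where your route diverges from the paper is in how (ii) and (iii) are obtained. You propose an abstract Lebesgue-measure factorisation on the Teichm\"uller side together with the geometric heuristic ``truncation of $m$ along $\gamma_1$ is simultaneous truncation of $m_1$ and $m_2$, hence the transverse speed is $q(\widehat\varphi)+q(\widetilde\varphi)=\varkappa$''. The paper instead makes one concrete choice of polygonal coordinates $(y_1,\dots,y_{n_1-1},q,\widetilde z_1,\dots,\widetilde z_{n_2-1})$, introduces $r=\dfrac{q-e^{i\varphi_2}}{1-e^{i\varphi_2}}$, sets $\beta=\operatorname{Arg}\frac{r-1}{r}$ and $\rho=\bigl|\frac{r-1}{r}\bigr|^2$ (so $\Omega_\PC=\{\rho=1\}$), and computes $\dot\rho$ directly from the developing-map description of $v$. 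The result is
\[
\dot\rho=-2\,\frac{\sin\frac{\beta}{2}\sin\frac{\varphi_2-\beta}{2}\sin\frac{\widehat\beta}{2}\sin\frac{\varphi_1-\widehat\beta}{2}}{\sin\frac{\varphi_1}{2}\sin\frac{\varphi_2}{2}}\;\varkappa,
\]
i.e.\ the transverse speed is $\varkappa$ only up to an explicit sine prefactor, and this prefactor is then cancelled against the ratio $\det H/(\det H_1\det H_2)$ using the closed formula for $\det H$ from \S\ref{sec:polygonal_coordinates}. So the ``main obstacle'' you flag is exactly what the paper resolves, but by brute-force coordinate calculation rather than by your heuristic; your interpretation $\varkappa=q(\widehat\varphi)+q(\widetilde\varphi)$ is correct as an identity, but the paper never invokes it as the \emph{reason} the answer is $\varkappa$.

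Two small cautions about your plan as written. First, your Lebesgue identity $\mathrm d\mathrm{Leb}_{\mathcal T}=\pm\frac{\mathbf A^k}{\det H}\,|w|^{-2}\,\mathrm{Vol}\wedge\frac{i}{2}\,\mathrm dw\wedge\mathrm d\bar w$ has the wrong power of $|w|$ (the Jacobian of $(z_1,\dots,z_k)\mapsto(y_1,\dots,y_{k-1},w)$ gives $|w|^{2(k-1)}$, not $|w|^{-2}$); this is harmless once you normalise $|w|=1$, but as stated the subsequent combination does not quite go through. Second, the ``simultaneous truncation'' picture is not by itself enough to read off the transverse speed: the strip removed around $\gamma_1$ lies on only one side of the glued geodesic in each $m_i$, so it is half a truncation rather than a full one, and the connection to the transverse coordinate requires exactly the kind of explicit tracking you defer. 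In short, your plan is sound and aligned with the paper, but the place you mark as the obstacle is in fact the entire computational content of the proof, and the paper dispatches it by a direct calculation in explicit coordinates.
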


    By Lemma~\ref{div} and Lemma~\ref{integrand} we obtain

    \begin{align*}
        \int_{\mathcal{M}_{(\varphi, \alpha)}}
            2\left(\left(\frac{n}{\mathbf{a}} + \cot\frac{\varphi_1}{2}+\cot\frac{\varphi_2}{2}\right)g(\mathbf{a}) +
            \left(\left(\cot\frac{\varphi_1}{2}+\cot\frac{\varphi_2}{2}\right)\mathbf{a} - 1\right)g'(\mathbf{a})\right)
            \mathrm{Vol}
        = \\ =
        \sum_{(\widehat{\alpha}, \widetilde{\alpha})}
            \int_{\RR}
            \int_{\mathcal{M}_{\widehat{\varphi}(\beta),\widehat{\alpha}}\times \mathcal{M}_{\widetilde{\varphi}(\beta),\widetilde{\alpha}}}
                2\frac{\mathbf{a}_1^{n_1}\mathbf{a}_2^{n_2}}{(\mathbf{a}_1+\mathbf{a}_2)^{n}}
                g(\mathbf{a})\,
                \varkappa(\widehat{\alpha}, \widetilde{\alpha},\beta)\,
                \mathrm{d}\beta\wedge \mathrm{Vol}_1\wedge \mathrm{Vol}_2
    \end{align*}
    where
    $\varkappa = \cot\frac{\beta}{2} +\cot\frac{\varphi_2-\beta}{2} +
    \cot\frac{\widehat{\beta}}{2} +\cot\frac{\varphi_1-\widehat{\beta}}{2}$
    and $\mathbf{a}_1,\mathbf{a}_2$ are $\mathbf{a}$-functions on
    $\mathcal{M}_{\widehat{\varphi}(\beta),\widehat{\alpha}},
    \mathcal{M}_{\widetilde{\varphi}(\beta),\widetilde{\alpha}}$.

    From that by integration over level sets of $\mathbf{a}$-function we obtain the following proposition which implies Theorem~\ref{thm1}.
    \begin{proposition}
        \label{g_integrate}
        \begin{align*}
            \int _{\mathbb{R}_+}
                \left(\left(\frac{n}{a} + \cot\frac{\varphi_1}{2}+\cot\frac{\varphi_2}{2}\right)g(a) +
                \left(\left(\cot\frac{\varphi_1}{2}+\cot\frac{\varphi_2}{2}\right)a - 1\right)g'(a)\right)
                f_{\varphi,\alpha}(a)\,\mathrm{d}a
            = \\ =
            \sum_\PC \int_{\RR}\int_{\RR_+\times \RR_+}
                \frac{a_1^{n_1}a_2^{n_2}}{(a_1+a_2)^{n}}
                g(a_1+a_2)\,
                \varkappa(\PC,\beta)\,
                f(\widehat{\varphi}(\beta),\widehat{\alpha},a_1)
                f(\widetilde{\varphi}(\beta),\widetilde{\alpha},a_2)
                \mathrm{d}\beta\, \mathrm{d}a_1\, \mathrm{d}a_2
        \end{align*}
        where
        $f_{\varphi,\alpha}(a),
        f(\widehat{\varphi}(\beta),\widehat{\alpha},a_1),
        f(\widetilde{\varphi}(\beta),\widetilde{\alpha},a_2)$
        are the density functions of $\mathbf{a}$-functions on the corresponding moduli spaces.
    \end{proposition}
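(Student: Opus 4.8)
The plan is to deduce Proposition~\ref{g_integrate} from the identity displayed just before it --- which in turn is what one obtains by substituting Lemma~\ref{div} into the left-hand side of Lemma~\ref{st} and Lemma~\ref{integrand} into its right-hand side, the stratum $\Omega_0$ of Lemma~\ref{omega_structure} contributing nothing because it has real codimension at least two, and each $\Omega_\PC$ being fibred over the $\beta$-line with fibre $M'_{\widehat\varphi(\beta),\widehat\alpha}\times M'_{\widetilde\varphi(\beta),\widetilde\alpha}$. That displayed equality holds for every $g\in C^\infty_c(\mathbb{R}_+)$, and the whole content of the Proposition is to rewrite each of its integrals by pushing the relevant volume form forward along an area map; the overall constant factor $2$ present on both sides then cancels.

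For the left-hand side, write $\Phi(a)=\bigl(\tfrac{n}{a}+\cot\tfrac{\varphi_1}{2}+\cot\tfrac{\varphi_2}{2}\bigr)g(a)+\bigl((\cot\tfrac{\varphi_1}{2}+\cot\tfrac{\varphi_2}{2})a-1\bigr)g'(a)$. Since $\varphi$ and $\alpha$ are fixed, the integrand on $\mathcal{M}_{\varphi,\alpha}$ is $2\,\Phi(\mathbf{a})\,\mathrm{Vol}$; as $\operatorname{supp} g$ is a compact subset of the open half-line, $\Phi$ is smooth, bounded, and compactly supported (in particular the term $n/a$ is harmless on $\operatorname{supp} g$), so by the defining property of $f_{\varphi,\alpha}$ as the density of $\mathbf{a}_*\mathrm{Vol}_{\varphi,\alpha}$ this integral equals $2\int_{\mathbb{R}_+}\Phi(a)\,f_{\varphi,\alpha}(a)\,\mathrm{d}a$. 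For the right-hand side, fix a shuffle $\PC=(\widehat\alpha,\widetilde\alpha)$ and the parameter $\beta$: by the gluing construction of Section~\ref{sec:multiple_shortest} the area functions add, $\mathbf{a}=\mathbf{a}_1+\mathbf{a}_2$, while $\varkappa(\PC,\beta)$ is constant on the fibre $\mathcal{M}_{\widehat\varphi(\beta),\widehat\alpha}\times\mathcal{M}_{\widetilde\varphi(\beta),\widetilde\alpha}$, so the inner integrand is a bounded Borel function of $(\mathbf{a}_1,\mathbf{a}_2)$ times that constant. Since the pushforward of a product of forms under a product of maps is the product of the pushforwards, $(\mathbf{a}_1,\mathbf{a}_2)_*(\mathrm{Vol}_1\wedge\mathrm{Vol}_2)=f(\widehat\varphi(\beta),\widehat\alpha,a_1)\,\mathrm{d}a_1\otimes f(\widetilde\varphi(\beta),\widetilde\alpha,a_2)\,\mathrm{d}a_2$; substituting, integrating over $\beta$ by Fubini, and summing the finitely many shuffles produces the right-hand side of the Proposition multiplied by $2$. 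Cancelling the $2$'s finishes the argument.

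The manipulations above are essentially bookkeeping; the points I would spell out with some care are: (i) the integrability that licenses Fubini on the right --- guaranteed by the compact support of $g$, the finiteness of the Thurston volumes of the smaller moduli spaces, and the boundedness of the range of $\beta$ for which both $\mathcal{M}_{\widehat\varphi(\beta),\widehat\alpha}$ and $\mathcal{M}_{\widetilde\varphi(\beta),\widetilde\alpha}$ are nonempty; (ii) the fact that on the product moduli space equipped with the product volume form the joint law of $(\mathbf{a}_1,\mathbf{a}_2)$ is exactly the product of the two marginal densities --- immediate, but it is precisely what makes the factor $f\cdot f$ appear on the right; and (iii) that $f_{\varphi,\alpha}$ and its smaller analogues should in general be read as measures (e.g.\ the base-case $\delta$-functions), which causes no trouble since $g$, hence $\Phi$ and the whole right-hand integrand (which vanishes near the origin because $g$ does), is a genuine test function, so all pairings $\int\Phi\,\mathrm{d}(\mathbf{a}_*\mathrm{Vol})$ are well defined.
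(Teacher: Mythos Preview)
Your argument is correct and follows the same route as the paper: the paper's entire proof of Proposition~\ref{g_integrate} is the one-line remark ``from that by integration over level sets of $\mathbf{a}$-function we obtain the following proposition,'' i.e.\ push the displayed identity (coming from Lemmas~\ref{st}, \ref{div}, \ref{integrand}) forward along $\mathbf{a}$ on the left and along $(\mathbf{a}_1,\mathbf{a}_2)$ on the right, then cancel the common factor~$2$. Your points (i)--(iii) supply analytic details the paper leaves implicit, but the approach is identical.
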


    \section{Divergence of the vector field}\label{sec:divergence}

    \begin{proof}[Proof of the Lemma~\ref{div}.]
        For metrics from $\mathcal{M}_{\varphi, \alpha}$ with $\varphi=(\varphi_1,\varphi_2), \alpha=(\alpha_1,\dots,\alpha_n)$
        we denote the distinguished singular points by $X_1, X_2$ and other singular points by $Y_1,\dots,Y_n$.
        Recall the construction of polygonal coordinates from section~\ref{sec:polygonal_coordinates}.
        We pass to the polygonal coordinates $y_1=P_1-P_0, \dots, y_{n}=P_n-P_0$ depicted in Figure~\ref{fig:t}.
        They correspond to a collection of paths from point $Y_n$ to all other singular points such that the
        shortest geodesic $X_1 X_2$ together with paths $Y_n X_1$ and $Y_n X_2$ constitute a
        (possibly curved, see remark~\ref{rem:noconfuse}) triangle disjoint from other paths.
        Together with these affine coordinates we will use the corresponding homogeneous coordinates $w_1 = y_1/{y_n}, \dots, w_{n-1} = y_{n-1}/{y_n}$.

        The last coordinate will keep unchanged, and we can without loss of generality say that $y_n = 1$ and $w_i = y_i$ for $1\le i\le n-1$.
        The volume form in these coordinates is
        \begin{equation}\label{eq:Vol_equation}
            \mathrm{Vol} =
            \frac
            {\det H}
            {(2i)^{n-1} \left((\overline{w}, 1) H (w, 1)^T\right) ^ {n}}
            dw_1 \wedge d\overline{w}_1 \ldots dw_{n-1} \wedge d\overline{w}_{n-1}
        \end{equation}
        where $(w, 1)$ means $(w_1, \dots, w_{n-1}, 1)$.
        The expression $(\overline{w}, 1) H (w, 1)^T$ will be referred as $\mathbf{A}$.
        The length of the shortest geodesic $X_1 X_2$ is
        $l \coloneqq |z_1 - z_2| = \left|\frac{y_1}{1-e^{i\varphi_1}}-\frac{y_1-e^{-i\varphi_2}y_2}{1-e^{-i\varphi_2}}\right|$.
        Then the $\mathbf{a}$-function on the moduli space is written in our coordinates as $\mathbf{a} = \mathbf{A}/{l^2}$.

        Since the vector field $v$ represents the metric truncation in the neighborhood of segment $X_1 X_2$,
        the flow can be considered to keep paths form $Y_n$ to other singular points except $X_1, X_2$,
        therefore points $P_0,P_2, P_3, \dots, P_n$ in plane can be considered constant.
        The points $Z_1, Z_2, P_1$ on the contrary are changing such that the velocity
        $\frac{\partial P_1}{\partial t}$ is orthogonal to $Z_1 Z_2$ (see section~\ref{sec:metrics_truncation}).
        Precisely,
        \[
            \frac{\partial w_1}{\partial t}
            =\frac{\partial P_1}{\partial t}=2i(z_1-z_2)
            =2i\left(\frac{y_1}{1-e^{i\varphi_1}}-\frac{y_1-e^{-i\varphi_2}y_2}{1-e^{-i\varphi_2}}\right).
        \]
        Therefore, the vector field $v$ can be written in the form of differential operator as
        \begin{equation}\label{eq:v_equation}
            \partial_v = v_1\frac{\partial}{\partial w_1} + \overline{v}_1\frac{\partial}{\partial \overline{w}_1}
            \text{ where } v_1 = 2i\left(
            \frac{w_1}{1-e^{i\varphi_1}} -
            \frac{w_1-e^{-i\varphi_2}w_2}{1-e^{-i\varphi_2}}
            \right)
        \end{equation}

        \begin{figure}[ht]
            \centerline{\includegraphics[width=10cm]{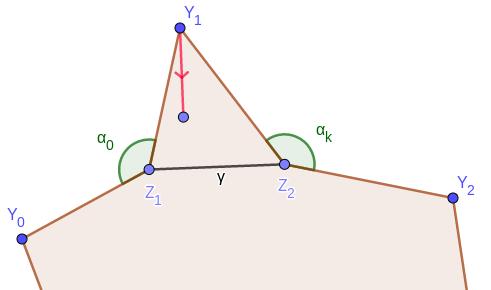}}
            \caption{Polygonal coordinates and shortest geodesic.}
            \label{fig:t}
        \end{figure}

        From formulae~\ref{eq:Vol_equation} and~\ref{eq:v_equation} we get
        \begin{align*}
            d\iota_{g(\mathbf{a})v}\mathrm{Vol} =
            \left(
                \frac{\partial}{\partial w_1}
                \frac{g(\mathbf{a}) v_1\det H}
                    {(2i)^{n-1}\mathbf{A}^{n}} +
                \frac{\partial}{\partial \overline{w}_1}
                    \frac{g(\mathbf{a}) \overline{v}_1\det H}
                {(2i)^{n-1}\mathbf{A}^{n}}
            \right)
            dw_1 \wedge d\overline{w}_1 \ldots dw_{n-3} \wedge d\overline{w}_{n-3}
            = \\ =
            \left(
                2g(\mathbf{a})\Re(\frac{\partial v_1}{\partial w_1}) +
                g'(\mathbf{a})\partial_v \mathbf{a} -
                n\,g(\mathbf{a})\frac{\partial_v \mathbf{A}}{\mathbf{A}}
            \right)\mathrm{Vol}
        \end{align*}

        A simple calculation shows that
        \[\partial_v l = -\left(\cot\frac{\varphi_1}{2}+\cot \frac{\varphi_2}{2}\right)l \quad\text{and}\quad
        \partial_v \mathbf{A} = -2l^2.\]
        Then we can derive
        \[\frac{\partial_v \mathbf{A}}{\mathbf{A}} = \frac{-2}{\mathbf{a}} \quad\text{and}\quad
        \partial_v \mathbf{a} =  \partial_v \frac{\mathbf{A}}{l^2} =
        \frac{l \partial_v\,\mathbf{A} - 2\,\mathbf{A}\,\partial_v l}{l^3} =
        -2 + 2\left(\cot\frac{\varphi_1}{2}+\cot \frac{\varphi_2}{2}\right)\mathbf{a}.\]
        Also $\Re\left(2\frac{\partial v_1}{\partial w_1}\right) =
        2\Im\left(\frac{1}{1-e^{i\varphi_1}}-\frac{1}{1-e^{-i\varphi_2}}\right) =
        \cot\frac{\varphi_1}{2}+\cot\frac{\varphi_2}{2}$.
        Thus, we have
        \[
            d\iota_{g(\mathbf{a})v}\mathrm{Vol} =
            2\left(\left(\frac{n}{\mathbf{a}} + 2\left(\cot\frac{\varphi_1}{2}+\cot\frac{\varphi_2}{2}\right)\right)g(\mathbf{a}) +
            2\left(\left(\cot\frac{\varphi_1}{2}+\cot\frac{\varphi_2}{2}\right)\mathbf{a} - 1\right)g'(\mathbf{a})\right)\mathrm{Vol}. \qedhere
        \]
    \end{proof}

    \section{Flow of the vector field through \texorpdfstring{$\Omega$}{Omega}}\label{sec:flow}

    Let $\PC=(\widehat{\alpha},\widetilde{\alpha})$ be a shuffle of $\alpha$.
    At every point of $\Omega_\PC$ the vector field $v$ has a discontinuity.
    That is due to different choices 
    of the shortest geodesic for the construction of $v$.
    For the generic point of $\Omega_\PC$ there are two such geodesics, so locally in some neighborhood $U$ there are
    two smooth vector fields $v_{1,2}$, one for each side $U_{1,2}$ of $U\setminus \Omega_\PC$.
    We need to find the flow of $v_{1,2}$ through $\Omega_\PC$ (cooriented with the normal vector outside $U_1$
    for $v_1$ and outside $U_2$ for $v_2$).
    Note that both $v_{1,2}$ are analytic and therefore can be uniquely continued to the whole $U$.

    Recall the map $p$ from $\Omega_\PC$ to $(0, 2\pi)$ from Lemma~\ref{omega_structure}.
    For all $\beta$ we have $p^{-1}(\beta)$ is
    $M'_{\widehat{\varphi}(\beta), \widehat{\alpha}}\times M'_{\widetilde{\varphi}(\beta), \widetilde{\alpha}}$.
    We note that locally the homogeneous coordinates on
    $\mathcal{M}_{\widehat{\varphi}(\beta), \widehat{\alpha}}, \mathcal{M}_{\widetilde{\varphi}(\beta), \widetilde{\alpha}}$
    for some $\beta$ can be extended to functions on some neighborhood of every point of $\Omega_\PC$.
    Moreover, that can be done so that for all $\beta'$ close to $\beta$ the restriction of these functions
    to $p^{-1}(\beta')$ will provide local coordinates on
    $\mathcal{M}_{\widehat{\varphi}(\beta), \widehat{\alpha}}, \mathcal{M}_{\widetilde{\varphi}(\beta), \widetilde{\alpha}}$.
    Therefore, the volume forms $\mathrm{Vol}_1, \mathrm{Vol}_2$ on
    $\mathcal{M}_{\widehat{\varphi}(\beta), \widehat{\alpha}}, \mathcal{M}_{\widetilde{\varphi}(\beta), \widetilde{\alpha}}$
    for different $\beta$ can be considered as well-defined forms on $\Omega_\PC$, which we also
    denote by $\mathrm{Vol}_{1,2}$ (of course, they are not volume forms there).

    Denote by $\mathbf{a_{1,2}}$ the area functions on
    $\mathcal{M}_{\widehat{\varphi}(\beta), \widehat{\alpha}}, \mathcal{M}_{\widetilde{\varphi}(\beta), \widetilde{\alpha}}$
    and also their pullbacks on $\Omega_\PC$.

    \begin{lemma}
        \label{iv}
        Choose a neighborhood $U$ in $\mathcal{M}_{\varphi, \alpha}$ of some point of $\Omega_\PC$.
        Let $v_1$ be the analytic vector field defined in $U$ and coinciding with $v$ on the side $U_1\subset U\setminus\Omega_\PC$.
        Then
        \[
            \iota_{g(\mathbf{a})v_1}\mathrm{Vol}\mid_{\Omega_\PC} =
            \frac{\mathbf{a}_\mathbf{1}^{n_1}\mathbf{a}_\mathbf{2}^{n_2}}{\mathbf{a}^{n}}
            g(\mathbf{a})\,
            \varkappa(\PC,\beta)\,
            d\beta\wedge \mathrm{Vol}_1\wedge \mathrm{Vol}_2
        \]
        where $\varkappa = \left(\cot\frac{\beta}{2} +\cot\frac{\varphi_2-\beta}{2} +\cot\frac{\widehat{\beta}}{2} +\cot\frac{\varphi_1-\widehat{\beta}}{2}\right)$
        and $\widehat{\beta} = \alpha_1+\dots+\alpha_{n_1} - \beta$.
    \end{lemma}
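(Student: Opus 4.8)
The goal is to evaluate the restriction of the $(2n-1)$-form $\iota_{g(\mathbf{a})v_1}\mathrm{Vol}$ to the stratum $\Omega_\PC$, where $\mathrm{Vol}$ is the full $2n$-dimensional volume form on $\mathcal{M}_{\varphi,\alpha}$. The strategy is to introduce local coordinates on a neighborhood $U$ of a generic point of $\Omega_\PC$ that are adapted to the splitting-gluing picture of Section~\ref{sec:multiple_shortest}: the function $\beta$ (equivalently $\widehat\beta$) defining the splitting, together with homogeneous polygonal coordinates on each of the two factors $\mathcal{M}_{\widehat\varphi(\beta),\widehat\alpha}$ and $\mathcal{M}_{\widetilde\varphi(\beta),\widetilde\alpha}$. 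These give $1 + 2(n_1-1) + 2(n_2-1) + 1 = 2n$ real coordinates once we also include the one transverse direction $s$ to $\Omega_\PC$ inside $U$; here I would use the length (or length-difference) of the second shortest geodesic as that transverse parameter, since $\Omega_\PC$ is exactly where the two competing geodesics have equal length.

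**The key computation.** First I would express $\mathrm{Vol}$ in these coordinates. Writing $\mathrm{Vol} = h\, ds \wedge d\beta \wedge \mathrm{Vol}_1 \wedge \mathrm{Vol}_2 \wedge (\text{angular factor})$ is too naive; instead the cleaner route is: on $\Omega_\PC$ itself the volume form of $\mathcal{M}_{\varphi,\alpha}$ decomposes, via the gluing construction, into $d\beta \wedge \mathrm{Vol}_1 \wedge \mathrm{Vol}_2 \wedge d s$ up to a scalar density. To find that density I would use the explicit matrix $H$ from the Lemma in Section~\ref{sec:polygonal_coordinates}: the gluing polygon is a concatenation of the two sub-polygons, so its Hermitian matrix is block-structured and its determinant factors (up to the correct trigonometric weights at the new vertices with angles $\beta$, $\varphi_2-\beta$, $\widehat\beta$, $\varphi_1-\widehat\beta$) as a product of the two sub-determinants times something whose $\partial$-log gives the factor $\varkappa$. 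This is where the four cotangents in $\varkappa$ come from — they are precisely $c_1+c_2$-type entries for the four new corners created when the two geodesics are glued. Then I would compute $\iota_{v_1}$ of this form: $v_1$ is (by the Section~\ref{sec:metrics_truncation} description of the truncation flow) transverse to $\Omega_\PC$, its component along $\partial_s$ being nonzero, while its action on $\beta$, $\mathrm{Vol}_1$, $\mathrm{Vol}_2$ contributes the derivative of $\mathbf{a}$-type quantities. Contracting kills the $ds$ and leaves $d\beta\wedge\mathrm{Vol}_1\wedge\mathrm{Vol}_2$ with the scalar coefficient $g(\mathbf{a})\cdot(\text{density})\cdot(\text{$v_1$-speed through }\Omega_\PC)$, and the factor $\mathbf{a}_1^{n_1}\mathbf{a}_2^{n_2}/\mathbf{a}^n$ emerges from rewriting everything in terms of the area functions (using $\mathbf{a} = \mathbf{a}_1+\mathbf{a}_2$ from Section~\ref{sec:multiple_shortest}) and the $\mathbf{A}^{-n}$ in formula~\eqref{eq:Vol_equation}, with the $n_i$ powers coming from the $\mathbf{A}_i^{-n_i}$ of the two factors versus the single $\mathbf{A}^{-n}$ of the glued surface.

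**Main obstacle.** The delicate point is the appearance of $\varkappa$ with exactly those four arguments and the sign/coorientation bookkeeping. The density relating $\mathrm{Vol}$ near $\Omega_\PC$ to $d\beta\wedge\mathrm{Vol}_1\wedge\mathrm{Vol}_2\wedge ds$ must be computed carefully, because a priori the change of variables from the $2n$ polygonal coordinates on $\mathcal{M}_{\varphi,\alpha}$ to (split coords, $\beta$, $s$) has a nontrivial Jacobian, and it is the $\partial\overline\partial\log$ structure of the Hermitian form — not just a naive determinant — that produces $\varkappa$ as a sum of cotangents rather than something messier. I would organize this by writing $\mathbf{A}$ for the glued polygon as a function of the sub-polygon coordinates plus $\beta$, differentiating $\log\mathbf{A}$, and identifying the cross terms; the four new-corner contributions $\cot\tfrac{\beta}{2}$, $\cot\tfrac{\varphi_2-\beta}{2}$, $\cot\tfrac{\widehat\beta}{2}$, $\cot\tfrac{\varphi_1-\widehat\beta}{2}$ are forced by the determinant formula in the Lemma of Section~\ref{sec:polygonal_coordinates} applied to the enlarged vertex set. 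Getting the constant right (no spurious factor of $2$, correct power $n$ versus $n-1$, consistent orientation so that the $v_1$ and $v_2$ contributions will later add rather than cancel) is the real work; everything else is the routine contraction $\iota_{v_1}$ computed exactly as in the proof of Lemma~\ref{div}.
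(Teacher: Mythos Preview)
Your overall plan---introduce adapted coordinates on $U$, contract $\iota_{v_1}$, restrict to $\Omega_\PC$, and identify the result as (density) $\times$ ($v_1$-speed through $\Omega_\PC$)---is correct and is exactly what the paper does. But you have mis-located the origin of the factor $\varkappa$, and this is not just a cosmetic issue: if you look for $\varkappa$ where you say it is, you will not find it.

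You attribute the four cotangents to the determinant/$\partial\overline\partial\log\mathbf A$ structure, writing that they are ``$c_1+c_2$-type entries for the four new corners'' and ``forced by the determinant formula''. But the determinant formula in Section~\ref{sec:polygonal_coordinates} produces a \emph{product of sines} of half-angles, not a sum of cotangents; the ratio $\det H/(\det H_1\det H_2)$ together with the Jacobian $dq\wedge d\overline q\leftrightarrow d\beta\wedge d\rho$ contributes a factor of the shape
\[
\frac{\sin\frac{\varphi_1}{2}\sin\frac{\varphi_2}{2}}
     {\sin\beta\,\sin(\varphi_2-\beta)\,\sin\widehat\beta\,\sin(\varphi_1-\widehat\beta)}\,,
\]
with no cotangents in sight. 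The sum of cotangents $\varkappa$ comes instead from the \emph{transverse speed} of the truncation flow. In the paper's coordinates the transverse variable is $\rho=\bigl|\tfrac{r-1}{r}\bigr|^2$ (essentially the squared length-ratio of the two competing geodesics), and a direct computation of $\dot r$ along $v_1$ yields
\[
\dot\rho \;=\; -2\,\frac{\sin\frac{\beta}{2}\sin\frac{\varphi_2-\beta}{2}\sin\frac{\widehat\beta}{2}\sin\frac{\varphi_1-\widehat\beta}{2}}
                      {\sin\frac{\varphi_1}{2}\sin\frac{\varphi_2}{2}}\;\varkappa\,.
\]
When this is multiplied against the sine-ratio density above, the sine factors cancel and only $\varkappa$ and the area-ratio $\mathbf a_1^{n_1}\mathbf a_2^{n_2}/\mathbf a^{n}$ survive. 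So the geometric heart of the lemma is the computation of $\dot\rho$---how truncation along one geodesic alters the length of the other---and that is the step your proposal does not contain.
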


    \begin{proof}
        Choose a generic point $\widehat{m}\in \Omega_\PC$ and a metric $m\in\widehat{m}$ such that
        the length of shortest geodesic (both lengths of shortest geodesics) between
        distinguished singular points is unit.
        We pass to coordinates $X_1, \dots, X_{n-1}$ depicted in Figure~\ref{fig:wpc}
        (here we have changed notation and the point $X_{n_1+1}$ corresponds to cone angle $\varphi_2$).
        The base coordinate for homogeneous coordinates will be $X_{n_1}$ and these coordinates will be
        denoted as $y_1 = X_1/X_{n_1}, \dots, y_{n_1-1}=X_{n_1-1}/X_{n_1}, q = X_{n_1+1}/X_{n_1},
        \widetilde{z}_1 = X_{n_1+2}/X_{n_1}, \dots, \widetilde{z}_{n_2-1}=X_{n-1}/X_{n_1}$.
        Denote also $z_i = \widetilde{z}_i/q$ for $i=1,\dots,n_2-1$.
        Denote $r = \frac{q - e^{i\varphi_2}}{1 - e^{i\varphi_2}}$.
        Then $\Omega$ is locally the set of metrics where $r$ is the circumcenter of triangle $(0,1,q)$ and can be defined by equation $|r| = |1-r| \Leftrightarrow |\frac{r-1}{r}|=1$.

        We treat $(y_1, \dots, y_{n_1-1})$ as coordinates on
        $\mathcal{M}_{\widehat{\varphi}(\beta), \widehat{\alpha}}$ for different $\beta'$.
        And also $(z_1, \dots, z_{n_2-1})$ can be treated as coordinates on
        $\mathcal{M}_{\widetilde{\varphi}(\beta), \widetilde{\alpha}}$.

        \begin{figure}[ht]
            \centerline{\includegraphics[width=10cm]{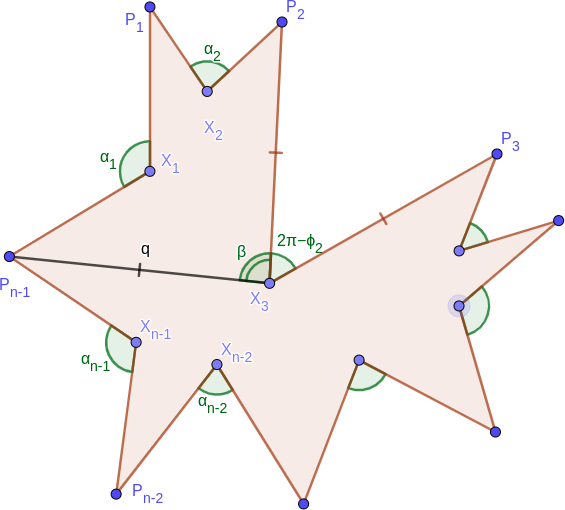}}
            \caption{Coordinates for metrics from $\Omega_\PC$.}
            \label{fig:wpc}
        \end{figure}

        The function $\beta$ from Lemma~\ref{omega_structure} on $\Omega$ can be locally written as
        $\operatorname{Arg}\left(\frac{r-1}{r}\right)$ and can be extended by this formula to some neighborhood in $\mathcal{M}_{\varphi, \alpha}$.
        Also, we will need the function $\rho=|\frac{r-1}{r}|^2$.


        First, we pass from coordinates $(y_1,\dots,y_{n_1-1},q,\widetilde{z}_1,\dots,\widetilde{z}_{n_2-1})$ to coordinates $(y_1,\dots,y_{n_1-1}$, $z_1,\dots,z_{n_2-1}, \beta, \rho)$ of which the last two are real and other are complex.
        For simplicity, we will use notation
        \begin{align*}
            DY &= \dd y_1\wedge\dd\overline{y}_1\wedge\dots\wedge\dd y_{n_1-1}\wedge\dd\overline{y}_{n_1-1}, \\
            DZ &= \dd z_1\wedge\dd\overline{z}_1\wedge\dots\wedge\dd z_{n_2-1}\wedge\dd\overline{z}_{n_2-1}.
        \end{align*}
        A direct computation shows that
        $\dd q\wedge \dd \overline{q} = 4i\sin^2\frac{\varphi_2}{2}|r|^4\,\dd \beta\wedge \dd \rho$ which
        on $\Omega_\PC$ equals $\frac{i\sin^2\frac{\varphi_2}{2}}{4\sin^4\frac{\beta}{2}}\dd \beta\wedge \dd \rho$.
        The volume form can be written as
        \[\mathrm{Vol}=\frac{4i\sin^2\frac{\varphi_2}{2}|q|^{2(n_2-1)} \det H}{\rho (2i)^{n-1} \mathbf{A}^{n}}
        DY\wedge DZ\wedge \dd \beta\wedge \dd \rho.\]

        The (locally defined) map
        \begin{align*}
            \mathcal{M}_{\varphi, \alpha} & \to \mathcal{M}_{\widehat{\varphi}(\beta), \widehat{\alpha}}
                \times \mathcal{M}_{\widetilde{\varphi}(\beta), \widetilde{\alpha}}\times \CC,
            \\ (y_1,\dots,y_{n_1-1},r,z_1,\dots,z_{n_2-1}) & \mapsto(y_1,\dots,y_{n_1-1}),(z_1,\dots,z_{n_2-1}),r
        \end{align*}
        is locally a homeomorphism for all $\beta_0$ from some open set.


        Note that to find $\iota_v \mathrm{Vol}|_\Omega$ we need only one coordinate of $v$, namely $\dot{\rho}$ where dot stands for differentiation along $v$.
        In affine coordinates $v$ changes the coordinates in the following way.
        \begin{align}
            &\dot{X}_0 =
            \frac{e^{i\varphi_1/2}}{\Im(e^{i\varphi_1/2})}(R-X_0) = (\cot\frac{\varphi_1}{2}+i)(R-X_0),\\
            &\dot{R} =
            \frac{e^{-i\varphi_2/2}}{\Im(e^{i\varphi_2/2})}(X_0-R) = (\cot\frac{\varphi_2}{2}-i)(X_0-R),\\
            &\dot{X}_{n_1} = \dot{X_0}e^{-i(\alpha_1+\dots+\alpha_{n_1})}.
        \end{align}

        Denote $\widehat{\beta} = \alpha_1+\dots+\alpha_{n_1}-\beta$.
        Then, using the relation $r=\frac{1}{1-e^{-i\beta}}$ that holds (locally) on $\Omega$, we obtain
        \begin{align*}
            \dot{r} &= \frac{(\dot{R}-\dot{X}_0)(X_{n_1}-X_0)-(\dot{X}_{n_1} - \dot{X_0})(R-X_0)}{(X_{n_1}-X_0)^2} =\\
                    &= -\left(\cot\frac{\varphi_1}{2}+\cot\frac{\varphi_2}{2}\right)r -
                        \left(e^{-i(\beta+\widehat{\beta})}-1\right)r^2.
        \end{align*}
        By the definition, $\rho = |1 - \frac{1}{r}|^2$, then
        \begin{multline*}
            \dot{\rho} = 2\Re\left(\left(1-\frac{1}{\overline{r}}\right)\frac{\dot{r}}{r^2} \right) =
            2\left(\cot\frac{\varphi_1}{2}+\cot\frac{\varphi_2}{2}\right)
            -2\left( \frac{\cos(\frac{\varphi_1}{2}-\widehat{\beta})}{\sin\frac{\varphi_1}{2}}+
            \frac{\cos(\frac{\varphi_2}{2}-\beta)}{\sin\frac{\varphi_2}{2}} \right)
            =\\=
            -2\frac{\sin\frac{\beta}{2} \sin\frac{\varphi_2-\beta}{2}
                    \sin\frac{\widehat{\beta}}{2} \sin\frac{\varphi_1-\widehat{\beta}}{2}}
                   {\sin\frac{\varphi_1}{2}\sin\frac{\varphi_2}{2}}
            \left(\cot\frac{\beta}{2} +\cot\frac{\varphi_2-\beta}{2}
                  +\cot\frac{\widehat{\beta}}{2} +\cot\frac{\varphi_1-\widehat{\beta}}{2}\right)
        \end{multline*}

        So we have
        \begin{multline*}
            \iota_v \mathrm{Vol}|_\Omega =
            \iota_v \left(\frac{i\sin^2\frac{\varphi_2}{2}|q|^{2(n_2-1)} \det H}
                          {4\sin^4\frac{\beta}{2} (2i)^{n-1} \mathbf{A}^{n}}
            DY\wedge DZ\wedge \dd\beta \wedge \dd\rho\middle)\,\right|_\Omega
            =\\=
            \frac{i\sin^2\frac{\varphi_2}{2}|q|^{2(n_2-1)} \det H\dot{\rho}}
                 {4\sin^4\frac{\beta}{2}(2i)^{n-1} (\mathbf{a}|r|^2)^{n}}
             DY\wedge DZ\wedge \dd \beta
        \end{multline*}

        We can express
        $\mathrm{Vol}_1 = \frac{\det H_1\ DY}{(2i)^{n_1-1}(\mathbf{a}_1|r|^2)^{n_1}}$ and
        $\mathrm{Vol}_2 = \frac{\det H_2\ DZ}{(2i)^{n_2-1}(\mathbf{a}_2|r|^2|q|^{-2)})^{n_2}}$.
        Therefore,
        \[\iota_v \mathrm{Vol}|_\Omega =
        \frac{\det H}{\det H_1\det H_2}
        \frac{\mathbf{a}_1^{n_1}\mathbf{a}_2^{n_2}}{\mathbf{a}^{n}}
        \frac{\sin^2\frac{\varphi_2}{2}}{4\sin^4\frac{\beta}{2}|q|^2}
        \dot{\rho}\ \mathrm{Vol}_1\wedge \mathrm{Vol}_2\wedge \dd\beta.\]

        Using expression for $\det H$ from section~\ref{sec:polygonal_coordinates} we obtain

        \begin{align*}
            \iota_v \mathrm{Vol}|_\Omega =
            \frac{\mathbf{a}_1^{n_1}\mathbf{a}_2^{n_2}}{\mathbf{a}^{n}}
            \frac
            {\sin\frac{\varphi_2}{2}\sin\frac{\varphi_1}{2}}
            {\sin\beta \sin (\varphi_2-\beta) \sin\widehat{\beta} \sin(\varphi_1-\widehat{\beta})}
            \dot{\rho}\ \mathrm{Vol}_1\wedge \mathrm{Vol}_2\wedge \dd\beta
            = \\ =
            \frac{\mathbf{a}_1^{n_1}\mathbf{a}_2^{n_2}}{\mathbf{a}^{n}}
            \left(\cot\frac{\beta}{2} +\cot\frac{\varphi_2-\beta}{2}
                  +\cot\frac{\widehat{\beta}}{2} +\cot\frac{\varphi_1-\widehat{\beta}}{2}\right)
            \mathrm{Vol}_1\wedge \mathrm{Vol}_2\wedge \dd\beta
        \end{align*}

        By the linearity, $\iota_{g(\mathbf{a})v} \mathrm{Vol}|_\Omega$ differs by the factor of $g(a)$, and
        we obtain the required formula.
    \end{proof}

    By the symmetry, the integrand from the vector field $v_2$ will be the same, so Lemma~\ref{integrand} is proved.

    \section{Computation example}\label{sec:examples}

    For $n=1$ the moduli space of 3-punctured sphere is just one point, so the simplest nontrivial case is when $n=2$.
    Consider the most symmetric moduli space $\mathcal{M}_{\pi,\pi, \pi,\pi}$.
    The points of this moduli space correspond to equifacial tetrahedra.
    Let us compute the distribution of $\mathbf{a}$-function.
    By Proposition~\ref{g_integrate} the distribution function $f$ for any smooth function $g$ with compact support
    satisfies
    \[\int_{\mathbb{R}_+} \left(g'(a)-2\frac{g(a)}{a}\right)f(a)ds =
    \int_0 ^{\frac{\pi}{2}} \frac{g(\sin{2\beta})}{\sin{2\beta}}d\beta.\]
    Solving this equation gives
    \[
        f(s) =
        \begin{cases}
            \frac{1-\sqrt{1-s^2}}{s^2}, & \text{for } 0 \leq s \leq 1 \\
            \frac{1}{s^2}, & \text{for } 1 \leq s\text{ .}
        \end{cases}
    \]
    From this answer we can calculate volume of the moduli space:
    $\mathrm{Vol}(\mathcal{M}_{\pi,\pi, \pi,\pi}) = \int_0 ^\infty f(a)da = \frac{\pi}{2}$.


    \begin{figure}
     \centering
     \begin{subfigure}[b]{0.45\textwidth}
         \centering
         \includegraphics[width=\textwidth]{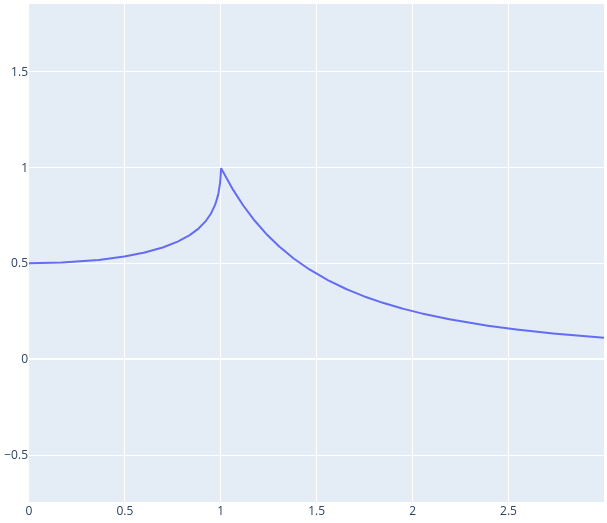}
         \caption{$f(a)$}
         \label{fig:densities_4pi_a}
     \end{subfigure}
     \hfill
     \begin{subfigure}[b]{0.45\textwidth}
         \centering
         \includegraphics[width=\textwidth]{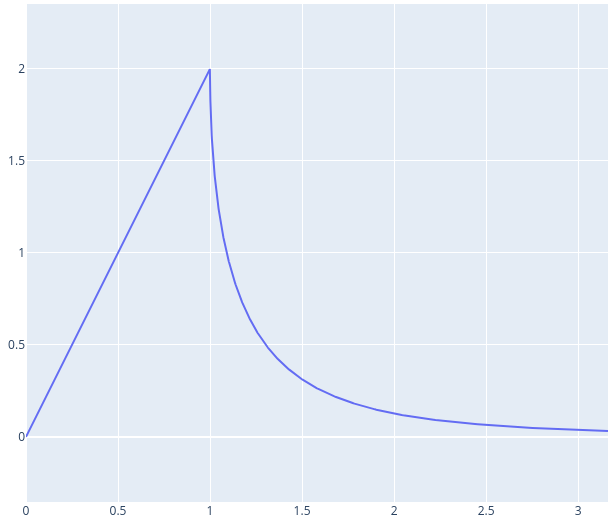}
         \caption{$\rho(l)$}
         \label{fig:densities_4pi_l}
     \end{subfigure}
        \caption{Density of $\bm{a}$- and $\bm{l}$-function for $\varphi=(\pi, \pi), \alpha=(\pi, \pi)$}
        \label{fig:densities_4pi}
    \end{figure}

    We can also transform this into the distribution of the $\bm{l}$-function, the densities of these distributions
    are in Fig.~\ref{fig:densities_4pi}.
    From numeric computation, the mean and the median values of the length for equifacial tetrahedra of unit area
    are approximately $1.09$ and $0.89$.
    For comparison, the value of $\bm{l}$-function for the regular tetrahedron is $1/\sqrt[4]{3}\approx 0.76$
    and for the double-sided surface of a square (which also belongs to this moduli space) is $1/\sqrt{2}\approx0.71$
    if marked singular points form a side of the square and $1$ if the marked points are opposite.

    \begin{figure}
     \centering
     \begin{subfigure}[b]{0.45\textwidth}
         \centering
         \includegraphics[width=\textwidth]{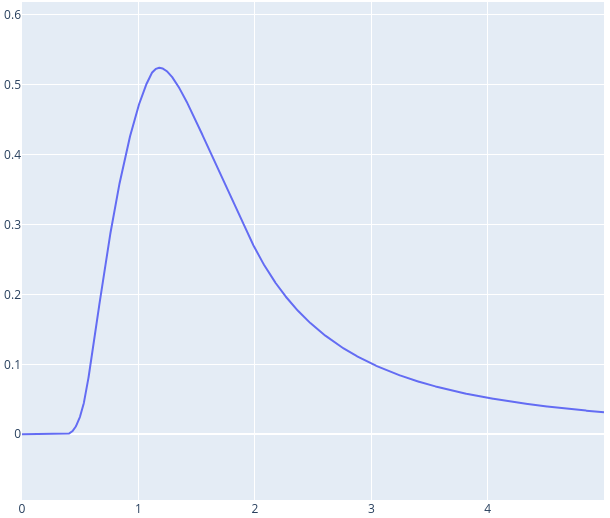}
         \caption{$f(a)$}
         \label{fig:densities_54pi5_a}
     \end{subfigure}
     \hfill
     \begin{subfigure}[b]{0.45\textwidth}
         \centering
         \includegraphics[width=\textwidth]{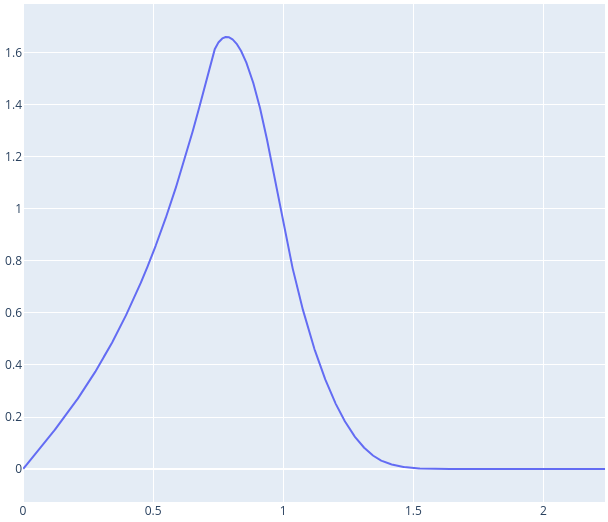}
         \caption{$\rho(l)$}
         \label{fig:densities_54pi5_l}
     \end{subfigure}
        \caption{Density of $\bm{a}$- and $\bm{l}$-function for $\varphi = (6\pi/5, 6\pi/5), \alpha = (4\pi/5, 4\pi/5, 4\pi/5)$}
        \label{fig:densities_54pi5}
    \end{figure}

    We also give the results of numerical computations for distributions of area and length functions
    in case $\varphi = (6\pi/5, 6\pi/5), \alpha = (4\pi/5, 4\pi/5, 4\pi/5)$, i.e.~five cone singularities
    of equal total angle.
    The plots are in Figure~\ref{fig:densities_54pi5}, the mean and median of $\bm{l}$ are approximately
    $0.71$ and $0.76$.
    For comparison, for double-sided regular pentagon the $\bm{l}$-function is approximately $0.54$ if
    the marked points are chosen to form a side of the pentagon, and approximately $0.87$ if
    the marked points form a diagonal.
    For the square pyramid with required angle defects the $\bm{l}$-function is approximately $0.45, 0.64, 0.70$
    for different choices of marked points.

    \bibliographystyle{plain}
    \bibliography{refs}

\begin{thebibliography}{10}

\bibitem{avila2013sl}
Artur Avila, Carlos Matheus, and Jean-Christophe Yoccoz.
\newblock {$SL(2,\mathbb{R})$}-invariant probability measures on the moduli
  spaces of translation surfaces are regular.
\newblock {\em Geometric and Functional Analysis}, 23(6):1705--1729, 2013.

\bibitem{deligne1986monodromy}
Pierre Deligne and George~Daniel Mostow.
\newblock Monodromy of hypergeometric functions and non-lattice integral
  monodromy.
\newblock {\em Publications Math{\'e}matiques de l'IH{\'E}S}, 63:5--89, 1986.

\bibitem{masur2022expected}
Howard Masur, Kasra Rafi, and Anja Randecker.
\newblock Expected covering radius of a translation surface.
\newblock {\em International Mathematics Research Notices},
  2022(10):7967--8002, 2022.

\bibitem{masur1991hausdorff}
Howard Masur and John Smillie.
\newblock Hausdorff dimension of sets of nonergodic measured foliations.
\newblock {\em Annals of Mathematics}, 134(3):455--543, 1991.

\bibitem{mcmullen2017gauss}
Curtis~T McMullen.
\newblock The gauss-bonnet theorem for cone manifolds and volumes of moduli
  spaces.
\newblock {\em American Journal of Mathematics}, 139(1):261--291, 2017.

\bibitem{mirzakhani2007simple}
Maryam Mirzakhani.
\newblock Simple geodesics and weil-petersson volumes of moduli spaces of
  bordered riemann surfaces.
\newblock {\em Inventiones mathematicae}, 167(1):179--222, 2007.

\bibitem{nguyen2010triangulations}
Duc-Manh Nguyen.
\newblock Triangulations and volume form on moduli spaces of flat surfaces.
\newblock {\em Geometric and Functional Analysis}, 20(1):192--228, 2010.

\bibitem{nguyen2012volumes}
Duc-Manh Nguyen.
\newblock Volumes of the sets of translation surfaces with small saddle
  connections.
\newblock {\em arXiv preprint arXiv:1211.7314}, 2012.

\bibitem{sauglam2022signature}
{\.I}smail Sa{\u{g}}lam.
\newblock Signature calculation of the area hermitian form on some spaces of
  polygons.
\newblock In {\em In the Tradition of Thurston II}, pages 293--313. Springer,
  2022.

\bibitem{schwartz2015notes}
Richard~Evan Schwartz.
\newblock Notes on shapes of polyhedra.
\newblock {\em arXiv preprint arXiv:1506.07252}, 2015.

\bibitem{thurston1998shapes}
William~P Thurston.
\newblock Shapes of polyhedra and triangulations of the sphere.
\newblock {\em Geometry and Topology Monographs}, 1, 1998.

\bibitem{veech1993flat}
William~A Veech.
\newblock Flat surfaces.
\newblock {\em American Journal of Mathematics}, 115(3):589--689, 1993.

\end{thebibliography}

\end{document}